\documentclass[leqno]{siamart1116}
\usepackage{amssymb}
\usepackage{graphicx} 
\usepackage{xspace}
\usepackage{multirow		}
\usepackage{Macros/mydef}
\usepackage[numbers]{natbib}
\usepackage{subcaption}
\usepackage{enumitem}
\newsiamthm{example}{Example}
\usepackage{amsmath}
\usepackage{nicematrix}
\usepackage{booktabs}

\let\cite=\citet
\usepackage{accents}

\newcommand{\ttop}{{\mathsf T}}

\newtheorem{remark}{Remark}[section]

\usepackage{xcolor}

\title{
  High-order structure-preserving SBP finite difference methods for the Vlasov--Maxwell system on matrix-free GPUs\thanks{Authors are listed alphabetically} }

\author{
  Robin Dym\'er\thanks{Division of Scientific Computing, Department of Information
    Technology, Uppsala University, Box 524, 751\,05 Uppsala, Sweden
    (\email{robin.dymer@it.uu.se}, \email{ken.mattsson@it.uu.se},
    \email{murtazo.nazarov@it.uu.se}). Corresponding author: Robin Dym\'er.}
  \and Ken Mattsson\footnotemark[2]
  \and Murtazo Nazarov\footnotemark[2]}

\begin{document}

\maketitle

\renewcommand{\thefootnote}{\fnsymbol{footnote}} 
\renewcommand{\thefootnote}{\arabic{footnote}}

\begin{abstract}
  In this paper, we present a high-order, stable summation-by-parts (SBP) finite difference method for solving the Vlasov--Maxwell system in a 2D2V phase space. Central SBP operators for the advection terms are not stable when the solution becomes non-smooth and fine-scale filamentary structures develop, as is typical in high-dimensional Vlasov--Maxwell simulations. To address this issue, the method is stabilized using high-order upwind SBP operators. High-order explicit Runge--Kutta methods are employed for time integration. We prove that the fully discrete scheme exactly conserves mass and preserves momentum up to truncation error. Furthermore, we present a matrix-free implementation of the method on modern GPU architectures. A range of challenging benchmark problems is solved to demonstrate the accuracy, robustness, and performance of the proposed scheme.

\end{abstract}

\begin{keywords}
  Summation-by-parts, Vlasov--Maxwell, structure-preserving discretization,
  upwind finite differences, matrix-free GPU computing, high-order methods,
  kinetic plasma simulation
\end{keywords}

\pagestyle{myheadings}
\thispagestyle{plain}
\markboth{R. Dym\'er, K. Mattsson, and M. Nazarov}{SBP methods for the Vlasov--Maxwell system}

\section{Introduction}
\label{Sec:introduction}

The simulation of plasma dynamics is an important problem in modern physics and computational mathematics. Understanding and analyzing plasma physics is essential for many applications, including nuclear fusion energy and astrophysics.

Several mathematical models are used to study plasma dynamics. Fluid models, such as magnetohydrodynamics (MHD), introduced by \cite{Alfen_1942}, are among the most well-studied models. These models solve for conservative plasma variables, such as density, momentum, energy, and magnetic field, and are widely used to study large-scale plasma dynamics. However, fluid models such as MHD cannot accurately describe particle interactions at the microscopic level. For instance, important physical effects such as non-equilibrium dynamics, wave-particle interactions, and small-scale turbulence cannot be fully captured by MHD models.

The Vlasov--Maxwell system, introduced by \cite{Vlasov_1968}, provides a fundamental kinetic description of collisionless plasmas by modeling the evolution of particle distribution functions under self-consistent electromagnetic fields. It arises in a wide range of applications, including plasma physics, astrophysics, and controlled fusion, where accurate numerical approximation of multiscale and nonlinear phenomena is essential; see, for example, the book by \cite{Tajima_2004}.

The Vlasov equation is a scalar nonlinear equation for a distribution function in phase space. It is coupled with Maxwell's equations for the electric and magnetic fields. The phase space consists of spatial and velocity variables, resulting in six dimensions in full physical settings. Therefore, although the Vlasov equation is scalar, solving it in phase space is computationally challenging. In addition, fine-scale structures such as filamentation require accurate numerical schemes that preserve key structures of plasma dynamics, such as conservation laws. Failure to capture these structures may lead to numerical instabilities and nonphysical solutions.

Traditional numerical methods for simulating Vlasov equations include particle-in-cell methods, see for instance \cite{Victory_1991, Degond_2010}, and semi-Lagrangian schemes, see \cite{Filbet_2001, Shiroto_et_al_2019} and references therein. Particle-in-cell methods are widely used to simulate plasma dynamics due to their computational efficiency and scalability to higher dimensions. However, it is difficult to make them high-order accurate; they also suffer from statistical noise and may violate important physical structures of the system. Some of these limitations are addressed by semi-Lagrangian methods. However, semi-Lagrangian methods are mostly well suited for smooth problems and may suffer from a lack of conservation.

Grid-based methods have gained increasing attention in the numerical community due to their structure-preserving properties; see, for example, \citep{Jing-Shu_2011, Rossmanith_2011, Hittinger_2013, Kormann_2025} and references therein, mainly for discontinuous Galerkin, finite volume, and finite element approximations.

Finite difference methods are attractive for solving hyperbolic problems due to their computational efficiency and high-order accuracy \citep{LeVeque_2007}. They have been applied to kinetic plasma models, mostly for the Vlasov--Poisson equations; see \citep{Filbet_2003, Banks_et_al_2019} and references therein. However, their application to the fully coupled Vlasov--Maxwell system remains limited. We refer the reader to \citep{Shiroto_et_al_2019}, where a quadratically conservative finite difference scheme is presented for the relativistic Vlasov--Maxwell system.

The method in \citep{Shiroto_et_al_2019} employs summation-by-parts (SBP) operators in space and an implicit second-order method in time. SBP finite difference operators, see, e.g., \citep[Chap.~7]{Gustafsson_2008} and the reviews \citep{Svard_Nordstrom_2014, DelReyFernandez_et_al_2014}, provide a systematic framework for constructing stable high-order discretizations. These operators mimic integration by parts at the discrete level, enabling the derivation of energy estimates and conservation properties analogous to those in the continuous setting.

In this work, we develop a high-order SBP finite difference discretization for the Vlasov--Maxwell system in a two-dimensional phase space setting. The method is constructed using tensor-product SBP operators and incorporates an upwind stabilization mechanism based on a Lax--Friedrichs-type flux splitting, as presented in \citep{Mattsson_2017}. This stabilization introduces controlled numerical dissipation while maintaining the SBP structure.

The contributions of this work are as follows. Through a rigorous analysis, we prove that the proposed method conserves mass, momentum, and the $L^2$ norm of the distribution function in the semi-discrete setting. At the fully discrete level, we analyze time integration using Forward Euler and discuss the extension to explicit Runge--Kutta methods, showing that linear invariants are preserved. We also examine the effect of stabilization on total energy, which is no longer conserved due to the added dissipation.

In addition to the analytical results, we present a matrix-free implementation of the proposed method based on tensor-product stencil evaluations. This approach avoids the assembly of global operators and enables efficient computation in high-dimensional phase space. While the implementation is motivated by computational considerations, the focus of this work is on the underlying numerical analysis and structure-preserving properties of the scheme.

The performance of the method is demonstrated through a series of numerical experiments, including convergence studies and simulations of standard plasma instabilities. The results confirm the expected order of accuracy and illustrate the robustness of the approach.

The remainder of the paper is organized as follows. In Section~\ref{Sec:prelim}, we introduce the Vlasov--Maxwell system and its conservation properties. Section~\ref{Sec:sbp} presents the SBP discretization and establishes the main analytical results. The fully discrete scheme is discussed in Section~\ref{Sec:fully-discrete}. Section~\ref{Sec:gpu} describes the matrix-free implementation. Numerical experiments are presented in Section~\ref{sec:results}, and conclusions are drawn in Section~\ref{Sec:conclusions}.

\section{Preliminaries}
\label{Sec:prelim}
In this section we present the Vlasov--Maxwell system and discuss physical properties of it.

\subsection{Governing equations}
We consider the Vlasov--Maxwell system, which describes the evolution of a collisionless plasma under self-consistent electromagnetic fields. This system governs the evolution of the distribution function $f_s(\bx,\bv ,t)$ for a particle species $s$ in phase space $(\bx, \bv) \in \Omega := \Omega_{\bx} \CROSS \Omega_{\bv}$, where $\Omega_\bx, \Omega_\bv \subseteq \mathbb{R}^d$, and $d$ is the spatial dimension. The system satisfies the following equation:
\begin{equation}\label{eq:vm}
  \begin{aligned}
    \p_t f_s+\bv \SCAL\GRAD_{\bx }f_s +\frac{q}{m}(\bE +\bv \CROSS\bB )\SCAL\GRAD_{\bv }f_s  &\, = 0, 
                                                                                               \quad (\bx ,\bv ,t)\in\Omega\CROSS \polR^+, \\
    f_s (\bx ,\bv ,0) &\, =  f_{s,0}(\bx ,\bv ),
                        \quad (\bx ,\bv )\in\Omega,
  \end{aligned}
\end{equation}
where $f_{s,0}(\bx ,\bv )$ is a given initial condition, and $q_s$ and $m_s$ represent the charge and mass of species $s$. We use periodic boundary conditions in both the $\bx$ and $\bv$ directions when the phase space is bounded. 

The electric field $\bE (\bx ,t)$ and magnetic field $\bB (\bx ,t)$ are governed by Maxwell’s equations:
\begin{equation}\label{eq:maxwell}
  \begin{aligned}
    \frac{1}{c^2}\p_t \bE  &= \GRAD_{\bx} \CROSS \bB  - \mu_0 \bJ , \\
    \p_t \bB  &= -\GRAD_{\bx} \CROSS \bE, \\
    \bE(\bx, 0) &= \bE_0(\bx), \\
    \bB(\bx, 0) &= \bB_0(\bx),
  \end{aligned}
\end{equation}
where $\bE_0(\bx)$ and $\bB_0(\bx)$ are given initial conditions, $c = (\e_0 \mu_0)^{-1/2}$ is the speed of light in vacuum, and $\e_0$ and $\mu_0$ denote the permittivity and permeability, respectively. The charge density $\rho(\bx ,t)$ and current density $\bJ (\bx ,t)$ are defined by
\begin{equation}\label{eq:density}
  \bJ (\bx ,t) = \sum_s q_s \int_{\polR^{d}} \bv  f_s(\bx ,\bv ,t) \ud \bv ,
  \qquad
  \rho(\bx ,t) = \sum_s q_s \int_{\polR^{d}} f_s(\bx ,\bv ,t) \ud \bv .
\end{equation}

The electric and magnetic fields should also satisfy the following Gauss' laws:
\begin{equation}\label{eq:gauss}
  \begin{aligned}
    \GRAD_{\bx } \SCAL \bE  &= \frac{\rho}{\e_0}, \\
    \GRAD_{\bx } \SCAL \bB  &= 0.
  \end{aligned}
\end{equation}

In this work, we adopt nondimensional units and set these constants to unity. We also focus on a single species with $s=1$ and assume $q_s/m_s \equiv 1$. We further drop the subscript and denote the distribution function by $f(\bx, \bv, t)$.

\subsection{Vlasov--Maxwell equations in 2D2V}
\label{Sec:VM-2D2V}

For the rest of the paper, we set $d=2$ and study the Vlasov--Maxwell system \eqref{eq:vm}--\eqref{eq:maxwell} in two spatial and two velocity (2D2V) dimensions. First, we show how the equations can be written in this case, and then we derive stability estimates in the semi-discrete setting. 

Let the domain $\Omega$ be defined as
\[
  \Omega_{\bx} := [0, L_x] \CROSS [0, L_y], \quad \Omega_{\bv}:= [0, L_v] \CROSS [0, L_w],
\]
where we define $\bx := (x,y)$ and $\bv := (v,w)$ as the spatial and velocity coordinates, respectively. We denote the electromagnetic fields by $\bE = (E_x, E_y, E_z)$ and $\bJ = (J_x, J_y, J_z)$, and let $\bB = (B_x, B_y, B_z)$. Under the 2D2V reduction, only the out-of-
plane magnetic field component $B_z$ is retained. Then, the reduced 2D2V Vlasov--Maxwell system for the unknown vector $\bU := (f, E_x, B_z, E_y)^\ttop$ can be written as
\begin{equation}\label{eq:2d2v}
  \begin{cases}
    \begin{aligned}
      \p_t f =&\ -v\p_x f - w\p_y f - \big(E_x + w B_z \big) \p_v f - \big(E_y - vB_z \big) \p_w f ,\\
      \p_t E_x =&\ \p_y B_z - J_x,\\
      \p_t B_z =&\ \p_y E_x - \p_x E_y,\\
      \p_t E_y =&\ -\p_x B_z - J_y.
    \end{aligned}
  \end{cases}
\end{equation}

It is often useful to write the above system in matrix form:
\begin{equation}\label{eq:system2D2V}
  \begin{aligned}
    \p_t \bU = \sfA_x \p_x \bU + \sfA_y \p_y \bU + \sfA_{v} \p_v \bU + \sfA_{w} \p_w \bU + F&, \quad (\bx, \bv, t) \in \Omega\CROSS \polR^+, \\
    \bU(0) = \bU_0&,\quad (\bx, \bv) \in \Omega,
  \end{aligned}
\end{equation}
where the coefficient matrices are defined as 
\begin{equation*}
  F =
  \begin{pmatrix}
    0\\
    -J_x\\
    0\\
    -J_y
  \end{pmatrix},
  \quad
  \sfA_x =
  \begin{pmatrix}
    -v & 0 & 0 & 0 \\
    0  & 0 & 0 & 0 \\
    0  & 0 & 0 & -1 \\
    0  & 0 & -1 & 0
  \end{pmatrix},
  \quad
  \sfA_y =
  \begin{pmatrix}
    -w & 0 & 0 & 0 \\
    0  & 0 & 1 & 0 \\
    0  & 1 & 0 & 0 \\
    0  & 0 & 0 & 0
  \end{pmatrix},
\end{equation*}

\begin{equation*}
  \sfA_v =
  \begin{pmatrix}
    -\big(E_x + wB_z\big) & 0 & 0 & 0 \\
    0 & 0 & 0 & 0 \\
    0 & 0 & 0 & 0 \\
    0 & 0 & 0 & 0
  \end{pmatrix},
  \quad
  \sfA_w =
  \begin{pmatrix}
    -\big(E_y - vB_z\big) & 0 & 0 & 0 \\
    0 & 0 & 0 & 0 \\
    0 & 0 & 0 & 0 \\
    0 & 0 & 0 & 0
  \end{pmatrix}.
\end{equation*}
Here $F = F(\bU)$ contains the current coupling through Maxwell's equations. In 2D2V, the current density in \eqref{eq:density} reduces to
\[
  J_{x} = \int_{\Omega_{\bv}} vf \ud \bv,\quad
  J_{y} = \int_{\Omega_{\bv}} wf \ud \bv.
\]

\begin{proposition}
  The Vlasov--Maxwell system preserves the following conservation properties:
  \begin{enumerate}
  \item Conservation of mass:
    \[
      \p_t \int_\Omega f \ud \bx \ud \bv = 0.
    \]

  \item Conservation of momentum:
    \[
      \p_t \Big(\int_\Omega f \bv \ud \bx \ud \bv
      + \int_{\Omega_\bx} \bE \CROSS \bB \ud \bx \Big) = 0.
    \]
    
  \item Conservation of total energy:
    \[
      \p_t
      \left(
        \frac12 \int_{\Omega} f |\bv|^2 \ud \bx \ud \bv
        +
        \frac12 \int_{\Omega_{\bx}} \big(|\bE|^2 + |\bB|^2\big)\ud\bx
      \right)
      = 0.
    \]

  \item Conservation of the $L^2$-norm of $f$:
    \[
      \p_t \|f\|^2 = 0.
    \]

  \end{enumerate}
\end{proposition}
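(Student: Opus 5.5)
The key observation is that the Vlasov equation can be written in conservative form. In phase space the advection field $\mathbf{a} := (\bv, \bE + \bv\CROSS\bB)$ is divergence free: $\GRAD_\bx\SCAL\bv = 0$, $\GRAD_\bv\SCAL\bE = 0$ since $\bE$ does not depend on $\bv$, and $\GRAD_\bv\SCAL(\bv\CROSS\bB) = 0$. Hence the first equation of \eqref{eq:2d2v} can be written as $\p_t f + \GRAD_{(\bx,\bv)}\SCAL(\mathbf{a} f) = 0$. In 2D2V this reads concretely: $\p_v(E_x + wB_z) = 0$ and $\p_w(E_y - vB_z) = 0$. All four properties will follow by multiplying this equation by a suitable test function and integrating over $\Omega$. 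Periodicity in $\bx$ and $\bv$ kills every boundary term; on an unbounded velocity domain I would instead assume sufficient decay of $f$. Throughout I assume smooth solutions.

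\emph{Mass and $L^2$.} Integrating the conservative form over $\Omega$ gives $\p_t\int_\Omega f = 0$ directly. For the $L^2$ norm, multiply by $f$ and write $f\,\mathbf{a}\SCAL\GRAD f = \tfrac12\,\mathbf{a}\SCAL\GRAD(f^2) = \tfrac12\GRAD\SCAL(\mathbf{a}f^2)$, again using $\GRAD\SCAL\mathbf{a} = 0$. The integral vanishes, so $\p_t\|f\|^2 = 0$. The same argument gives conservation of $\int G(f)$ for any smooth $G$.

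\emph{Energy.} Multiply by $\tfrac12|\bv|^2$ and integrate. The $\bx$-transport term vanishes. For the force term, integrate by parts in $\bv$:
\[
\int \tfrac12|\bv|^2\,(\bE+\bv\CROSS\bB)\SCAL\GRAD_\bv f = -\int f\,\bv\SCAL(\bE+\bv\CROSS\bB) = -\int_{\Omega_\bx}\bJ\SCAL\bE ,
\]
since $\bv\SCAL(\bv\CROSS\bB) = 0$. So the kinetic energy changes at the rate $\int\bJ\SCAL\bE$. For the fields, Maxwell's equations give
\[
\tfrac12\p_t\big(|\bE|^2+|\bB|^2\big) = \bE\SCAL(\GRAD\CROSS\bB) - \bB\SCAL(\GRAD\CROSS\bE) - \bJ\SCAL\bE = -\GRAD\SCAL(\bE\CROSS\bB) - \bJ\SCAL\bE ,
\]
and the Poynting divergence integrates to zero. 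The two $\bJ\SCAL\bE$ terms cancel. In 2D2V this is the explicit computation with $(E_x,E_y,B_z)$.

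\emph{Momentum (main obstacle).} Multiplying by $\bv$ and integrating by parts in $\bv$ gives
\[
\p_t\int f\bv = \int_{\Omega_\bx}(\rho\bE + \bJ\CROSS\bB),
\]
the Lorentz force density. I then need to show $\p_t\int_{\Omega_\bx}\bE\CROSS\bB = -\int_{\Omega_\bx}(\rho\bE + \bJ\CROSS\bB)$. Maxwell's equations give
\[
\p_t(\bE\CROSS\bB) = (\GRAD\CROSS\bB)\CROSS\bB - \bJ\CROSS\bB - \bE\CROSS(\GRAD\CROSS\bE).
\]
Using the vector identity $(\GRAD\CROSS\bA)\CROSS\bA = (\bA\SCAL\GRAD)\bA - \tfrac12\GRAD|\bA|^2$, the curl terms equal the divergence of the Maxwell stress tensor minus $\bE(\GRAD\SCAL\bE) + \bB(\GRAD\SCAL\bB)$. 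The divergence integrates to zero by periodicity. Invoking Gauss' laws \eqref{eq:gauss}, namely $\GRAD\SCAL\bE = \rho$ and $\GRAD\SCAL\bB = 0$ (the latter trivial in 2D2V, where $\bB = B_z\,\mathbf e_z$ depends only on $(x,y)$), yields exactly $-\rho\bE - \bJ\CROSS\bB$, and the sum cancels. This step is the delicate one: it relies on the Gauss law constraint, which is not one of the evolution equations in \eqref{eq:2d2v}. I would note that it is preserved in time provided it holds initially, by the charge continuity equation $\p_t\rho + \GRAD\SCAL\bJ = 0$, which is obtained by integrating the Vlasov equation in $\bv$. In 2D2V I would carry out the stress-tensor computation componentwise with $\bE\CROSS\bB = (E_yB_z, -E_xB_z)$ to make the cancellations explicit.
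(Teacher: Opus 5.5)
Your proposal is correct and follows the same route as the paper: test the Vlasov equation with $1$, $f$, $\mathbf{v}$ and $\tfrac12|\mathbf{v}|^2$, use Poynting's theorem for the field energy, and use Maxwell's equations with Gauss' law to show that the electromagnetic momentum absorbs the Lorentz force density $\rho\mathbf{E}+\mathbf{J}\times\mathbf{B}$; your stress-tensor computation spells out the step the paper only asserts in \eqref{eq:elec:mom}, and it correctly shows that this identity holds after integration over $\Omega_{\mathbf{x}}$ (up to a divergence), not pointwise as written there. One caveat, which the paper shares: periodicity in $\mathbf{v}$ does not remove the boundary terms when the weight is $\mathbf{v}$ or $|\mathbf{v}|^2$, because these weights are not periodic (for instance $-\int v\,(E_x+wB_z)\,\partial_v f$ leaves the term $-(v_{\max}-v_{\min})\,(E_x+wB_z)\,f|_{v=v_{\max}}$, integrated over $\mathbf{x}$ and $w$), so the momentum and energy identities in general require $f$ to vanish on $\partial\Omega_{\mathbf{v}}$ or to decay on $\mathbb{R}^d$, while mass and the $L^2$ norm are unaffected because $E_x+wB_z$ does not depend on $v$ and $E_y-vB_z$ does not depend on $w$.
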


\begin{proof}
  The conservation of mass is obtained by integrating the Vlasov equation. The conservation of momentum is a bit involved. Since we later establish it for our numerical scheme, we provide the complete derivation here. The conservation of momentum for the reduced 2D2V system \eqref{eq:2d2v} can be written as
  \begin{equation}\label{eq:cons:mom}
    \p_t
    \left(
      \int_{\Omega}
      \begin{pmatrix}
        v\\ w
      \end{pmatrix}
      f\,\ud\bv \ud\bx
      +
      \int_{\Omega_x}
      \begin{pmatrix}
        E_yB_z\\
        -E_xB_z
      \end{pmatrix}
      \ud\bx
    \right)
    =0.
  \end{equation}
  Here, the first term is usually referred to as {\em the particle momentum}, and the second term is referred to as {\em the electromagnetic momentum}.
  
  By multiplying the Vlasov equation by $\bv$ and integrating, we get:
  \[
    \begin{aligned}
      \int_\Omega  
      \begin{pmatrix}
        v\\ w
      \end{pmatrix}
      & \p_t f
        \ud \bx \ud \bv \\ 
      = 
      & \int_\Omega 
        \begin{pmatrix}
          v\\ w
        \end{pmatrix}
        \Big(
        -v\p_x f - w\p_y f - \big(E_x + w B_z \big) \p_v f - \big(E_y - vB_z \big) \p_w f
        \Big) \ud \bx \ud \bv.
    \end{aligned}
  \]
  
  The spatial derivative terms vanish after integration by parts under periodic boundary conditions. The velocity terms containing  $\p_w v $ and $\p_v w$ vanish after integration by parts. For the remaining terms, we write
  \[
    - \int_\Omega v \big(E_x + w B_z \big) \p_v f \ud \bx \ud \bv 
    =
    \int_\Omega \big(E_x + w B_z \big) f \ud \bx \ud \bv 
    =
    \int_{\Omega{_\bx}} \big(E_x \rho + J_y B_z \big) \ud \bx,
  \]
  and
  \[
    - \int_\Omega w \big(E_y - v B_z \big) \p_w f \ud \bx \ud \bv 
    =
    \int_\Omega \big(E_y - v B_z \big) f \ud \bx \ud \bv 
    =
    \int_{\Omega{_\bx}} \big(E_y \rho - J_x B_z \big) \ud \bx.
  \]
  
  Thus,
  \[
    \begin{aligned}
      \p_t
      \Big(
      \int_\Omega  
      \begin{pmatrix}
        v\\ w
      \end{pmatrix}
      f
      \ud \bx \ud \bv
      \Big)  
      = 
      \begin{pmatrix}
        \int_{\Omega{_\bx}} \big(E_x \rho + J_y B_z \big) \ud \bx \\
        \int_{\Omega{_\bx}} \big(E_y \rho - J_x B_z \big) \ud \bx.
      \end{pmatrix}
    \end{aligned}
  \]

  Next, using the Maxwell equations \eqref{eq:2d2v} and Gauss' laws \eqref{eq:gauss}, we can verify that 
  \begin{equation}\label{eq:elec:mom}
    \p_t
    \begin{pmatrix}
      E_y B_z \\
      -E_x B_z
    \end{pmatrix}
    =
    -
    \begin{pmatrix}
      E_x \rho + J_y B_z \\
      E_y \rho - J_x B_z,
    \end{pmatrix}
  \end{equation}
  which completes the proof of the momentum conservation \eqref{eq:cons:mom}.

  We next establish conservation of the total energy and the $L^2$-norm of $f$. Multiplying the Maxwell equations by the corresponding field components and integrating over the spatial domain, we obtain
  \[
    \frac12 \p_t \int_{\Omega_{\bx}} \big(|\bE|^2 + |B_z|^2\big)\ud\bx
    =
    - \int_{\Omega_{\bx}} \bJ \SCAL \bE \ud \bx,
  \]
  where periodic boundary conditions have been used to eliminate boundary terms.

  On the other hand, multiplying the Vlasov equation in \eqref{eq:2d2v} by
  $\frac12|\bv|^2$ and integrating over phase space gives the kinetic energy balance
  \[
    \frac12 \p_t \int_{\Omega} f |\bv|^2 \ud \bx \ud \bv
    =
    \int_{\Omega_{\bx}} \bJ \SCAL \bE \ud \bx .
  \]
  Adding the two identities yields conservation of total energy.

  Moreover, multiplying the Vlasov equation in \eqref{eq:2d2v} by $f$ and integrating over phase space yields conservation of the $L^2$-norm of $f$:
  \[
    \p_t \|f\|^2 = 0.
  \]  

\end{proof}

In the above analysis, we repeatedly use integration by parts together with periodic boundary conditions. The SBP operators used in this work mimic integration by parts at the discrete level; therefore, the above conservation properties are expected to carry over to the discrete level. One objective of this work is to design a numerical discretization that preserves these properties.

\section{Summation-by-parts discretization}
\label{Sec:sbp}

Since higher dimensional operators are constructed by taking a Kronecker product of one-dimensional operators, we give the definition of SBP operators in 1D. Let us consider a domain $D:= [0, L]$ discretized by $m$ equal-length intervals of size $h$:
\[
  x_i = i h, \quad i = 0,1,\dots,m, \quad h = \frac{L}{m}.
\]
Let $\ve_l$ and $\ve_r$ denote the following vectors in $\polR^{m+1}$:
\begin{equation}\label{eq:e_lr}
  \ve_l = \big(1, 0, \ldots, 0 \big)^\ttop,
  \quad
  \ve_r = \big(0, 0, \ldots, 1 \big)^\ttop.
\end{equation}
We define 
\begin{equation}
  \sfB = - \frac{1}{2} \ve_l \ve_l^\ttop + \frac{1}{2} \ve_r \ve_r^\ttop,
  \label{eq:B_definition}
\end{equation}
which will be useful below. 

\medskip
\begin{definition}
  A diagonal quadrature matrix $\sfH$ is said to define a discrete norm if it is symmetric positive definite and satisfies
  $
  \mathbf{1}^\top \sfH \mathbf{1} = L,
  $
  where $\mathbf{1}$ denotes the vector of ones in $\polR^{m+1}$.
\end{definition}

For given functions $f\in L^2(0,L)$ and $g\in L^2(0,L)$ we construct vectors 
$\vf:=\big( f(x_0), \ldots, f(x_m) \big)^\ttop$ and $\vg:=\big( g(x_0), \ldots, g(x_m) \big)^\ttop$. Then, the $L^2$-inner product is discretely approximated by
\[
  (f,g) \approx (\vf, \vg)_{\sfH},
\]
where
\[
  (\vf, \vg)_{\sfH} := \vf^\ttop \sfH \vg = 
  \sum_{i=0}^m \sfH_{ii} f(x_i) g(x_i).
\]

With this definition, we can approximate the integral of a function by 
\[
  \int_\Omega g(x) \ud x \approx (\mathbf{1}, \vg)_{\sfH} = 
  \mathbf{1}^\ttop \sfH \vg = \sum_{i=0}^m \sfH_{ii} g(x_i).
\]
We define the discrete $L^2$-norm by
\[
  \|\vg\|_{\sfH}^2 := (\vg, \vg)_{\sfH}.
\]

\medskip
\begin{definition}\label{def:sbp} 
  We say that a difference operator $\sfD_1$ approximating $\p_x$ is a $p$th-order accurate interior SBP operator with quadrature matrix $\sfH$ if $\sfH = \sfH^\ttop > 0$, and there exists a skew-symmetric matrix $\sfQ$, \ie $\sfQ + \sfQ^\ttop = 0$, such that
  \[
    \sfD_1 = \sfH^{-1} \big(\sfQ + \sfB \big),
  \]
  where $\sfB$ is defined in \eqref{eq:B_definition}.
\end{definition}

The SBP operator $\sfD_1$ is a nondissipative central-difference approximation of the first-order derivative. Therefore, when applying it to advection-dominated problems such as the Vlasov problem, high-frequency modes are not damped. A remedy for this problem is to use artificial dissipation, which ideally damps high-frequency modes; see, \eg \citep{Stiernstrom_et_al_2021}, where residual-based artificial viscosity is combined with upwind SBP operators for scalar conservation laws. To suppress these spurious modes, we employ upwind SBP operators, which add controlled numerical dissipation while preserving the SBP stability framework \citep{Mattsson_2017}.

\begin{definition}\label{def:sbp_upwind}
  We say that the difference operators $\sfD_\pm$ approximating $\p_x$ are first-derivative upwind SBP operators with quadrature matrix $\sfH$ if $\sfH = \sfH^\ttop > 0$, $\sfQ_+ + \sfQ_-^\ttop = 0$, $\sfQ_+ + \sfQ_+^\ttop = 2\sfS \leq 0$ (negative semidefinite), and
  \[
    \sfD_\pm = \sfH^{-1} \big(\sfQ_\pm + \sfB \big).
  \]
\end{definition}

We have the following relations between upwind and central SBP operators:
\begin{equation}\label{eq:upwind_identity}
  \begin{aligned}
    \frac{\sfD_++\sfD_-}{2} &= \sfH^{-1}\Big( \frac{\sfQ_+ + \sfQ_-}{2} + \sfB \Big) = \sfH^{-1}\Big( \sfQ + \sfB \Big) = \sfD_1,\\
    \frac{\sfD_+-\sfD_-}{2} &= \sfH^{-1}\Big( \frac{\sfQ_+ - \sfQ_-}{2} \Big) = \sfH^{-1}\Big( \frac{\sfQ_+ + \sfQ_+^\ttop}{2} \Big) = \sfH^{-1}\sfS.
  \end{aligned}
\end{equation}

In higher dimensions, the 1D finite difference operators need to be extended using the Kronecker product. For example, in two space dimensions, if $\sfA$ is an $m\CROSS n$ matrix and $\sfC$ is a $p\CROSS q$ matrix, then the Kronecker product $\sfA \otimes \sfC$ is an $mp \CROSS nq$ matrix of the form
\begin{equation}\label{eq:kronecker}
  \sfA \OTIMES \sfC =
  \begin{bmatrix}
    \sfA_{11} \sfC & \sfA_{12} \sfC & \cdots & \sfA_{1n} \sfC \\
    \sfA_{21} \sfC & \sfA_{22} \sfC & \cdots & \sfA_{2n} \sfC \\
    \vdots   & \vdots   & \ddots & \vdots   \\
    \sfA_{m1} \sfC & \sfA_{m2} \sfC & \cdots & \sfA_{mn} \sfC
  \end{bmatrix}.
\end{equation}

Using the above extension, we can easily discretize the phase space $\Omega$ in 2D2V. Let us assume $\Omega_\bx$ is a rectangular domain $[0, L_x]\times[0, L_y]$, and $\Omega_\bv$ is a rectangular domain $[0, L_v]\times[0, L_w]$. The domain $\Omega_\bx$ is discretized with an $(m_x + 1) \times (m_y + 1)$-point grid, and the domain $\Omega_\bv$ is discretized with an $(m_v + 1) \times (m_w + 1)$-point grid, with grid points defined as 
\begin{align*}
  x_i &= i h_x, \quad i = 0,1,\dots,m_x, \quad h_x = L_x/m_x, \\ 
  y_j &= j h_y, \quad j = 0,1,\dots,m_y, \quad h_y = L_y/m_y, \\
  v_k &= k h_v, \quad k = 0,1,\dots,m_v, \quad h_v = L_v/m_v, \\ 
  w_l &= l h_w, \quad l = 0,1,\dots,m_w, \quad h_w = L_w/m_w. \\
\end{align*}

Let $\sfI_r$ be the identity matrix of size $r$. Then, the difference operator $\sfD_1$ can be extended to 2D2V using the following directional derivatives:
\begin{equation}\label{eq:extension}
  \begin{aligned}
    \sfD_x &= \sfI_4 \otimes \sfD_1 \otimes \sfI_{m_y} \otimes \sfI_{m_v} \otimes \sfI_{m_w},\\
    \sfD_y &= \sfI_4 \otimes \sfI_{m_x} \otimes \sfD_1 \otimes \sfI_{m_v} \otimes \sfI_{m_w},\\
    \sfD_v &= \sfI_4 \otimes \sfI_{m_x} \otimes \sfI_{m_y} \otimes \sfD_1 \otimes \sfI_{m_w},\\
    \sfD_w &= \sfI_4 \otimes \sfI_{m_x} \otimes \sfI_{m_y} \otimes \sfI_{m_v} \otimes \sfD_1.
  \end{aligned}
\end{equation}
Similarly, the upwind SBP operators $\sfD_\pm$ are extended to 2D2V as
\begin{align*}
  \sfD_{x_\pm} &= \sfI_4 \otimes \sfD_{\pm} \otimes \sfI_{m_y} \otimes \sfI_{m_v} \otimes \sfI_{m_w},\\
  \sfD_{y_\pm} &= \sfI_4 \otimes \sfI_{m_x} \otimes \sfD_{\pm} \otimes \sfI_{m_v} \otimes \sfI_{m_w},\\
  \sfD_{v_\pm} &= \sfI_4 \otimes \sfI_{m_x} \otimes \sfI_{m_y} \otimes \sfD_{\pm} \otimes \sfI_{m_w},\\
  \sfD_{w_\pm} &= \sfI_4 \otimes \sfI_{m_x} \otimes \sfI_{m_y} \otimes \sfI_{m_v} \otimes \sfD_{\pm}.
\end{align*}

\subsection{Semi-discrete approximation of Vlasov--Maxwell}

Now, we are ready to discretize the Vlasov--Maxwell system in phase space. Let $\calN$ be the set of all grid multi-indices $\bn = (i,j,k,l)$, ordered lexicographically (i.e., in row-major order). We define
\[
  \begin{aligned}
    \vU(t)
    &:=
      (\bU_{\bn}(t))^\ttop_{\bn \in \calN},
      \mbox{ where }
      \bU_{\bn}(t)
      =
      \big(
      f_{\bn}(t),
      E_{x,\bn}(t),
      B_{z,\bn}(t),
      E_{y,\bn}(t)
      \big)^\ttop,
  \end{aligned}
\]
to be the grid-function approximation of $\bU(\bx, \bv,t)$ at time $t$. Then, the semi-discrete approximation of \eqref{eq:system2D2V} is given by
\begin{equation}\label{eq:sbp_2d2v}
  \begin{aligned}
    \p_t\vU &= 
              \big( 
              \overline{\sfA}_x \sfD_x +
              \overline{\sfA}_y \sfD_y +
              \overline{\sfA}_v \sfD_v +
              \overline{\sfA}_w \sfD_w
              \big)\vU + \overline{\sfF}, \quad t\in \polR^+ \\
    \vU(0) &= \vU_{0},
  \end{aligned}
\end{equation} 
where $\vU_{0}$ is the value of the initial condition at the grid points, and the coefficient matrices are defined as 
\[
  \begin{aligned}
    \overline{\sfA}_x &= \sfI_{m_x} \otimes \sfI_{m_y} \otimes \operatorname{diag}(-\bv) \otimes \sfI_{m_{w}} \otimes \sfone_f + \sfI_N \otimes \sfA^C_x, \\
    \overline{\sfA}_y &= \sfI_{m_x} \otimes \sfI_{m_y} \otimes \sfI_{m_v} \otimes \operatorname{diag}(-\bw) \otimes \sfone_f + \sfI_N \otimes \sfA^C_y, \\
    \overline{\sfA}_v &= \left[ \operatorname{diag}(-\bE_x) \otimes \sfI_{m_{v}} \otimes \sfI_{m_{w}} + \operatorname{diag}(-\bB_z) \otimes \sfI_{m_v} \otimes \operatorname{diag}(\bw)\right] \otimes \sfone_f, \\
    \overline{\sfA}_w &= \left[ \operatorname{diag}(-\bE_y) \otimes \sfI_{m_{v}} \otimes \sfI_{m_{w}} + \operatorname{diag}(\bB_z) \otimes \operatorname{diag}(\bv) \otimes \sfI_{m_w}\right] \otimes \sfone_f, \\
    \overline{\sfF}   &= \sfH_\Omega^{-1} \vF,
  \end{aligned}\]
where $\sfI_N = \sfI_{m_x} \otimes \sfI_{m_y} \otimes \sfI_{m_v} \otimes \sfI_{m_w}$ and $\sfone_f = \operatorname{diag}(1, 0, 0, 0)$.
The matrix $\sfone_f$ is used to pad the matrix to the correct dimensions. Furthermore, $\sfA^C_x$ and $\sfA^C_y$ are the
$\sfA_x$ and $\sfA_y$ matrices without the variable parts. What these operations simply do is discretize each entry in the continuous matrices
in phase space, yielding matrices with entries that are defined on the phase space grid. Lastly,
\[
  \sfH_\Omega
  =
  \sfI_4 \otimes \sfH_{m_x} \otimes \sfH_{m_y} \otimes \sfH_{m_{v}} \otimes \sfH_{m_{w}},
\]
and
\[
  \begin{aligned}
    \vF(t)
    &:=
      (\bF_{\bn}(t))^\ttop_{\bn \in \calN},
      \mbox{ where }
      \bF_{\bn}(t)
      =
      \big(
      0,
      -J_{x,\bn}(t),
      0,
      -J_{y,\bn}
      \big)^\ttop.
  \end{aligned}
\]

As mentioned earlier, the SBP operators in \eqref{eq:sbp_2d2v} are central difference schemes; therefore, we need to stabilize them. Upwind SBP operators can be used to stabilize the semi-discrete approximation \eqref{eq:sbp_2d2v}. However, the coefficient matrices $\overline{\sfA}_z$, $z=\{x,y,v,w\}$, need to be split into a part with non-negative eigenvalues, \eg $\overline{\sfA}_{z,+}$, and a part with non-positive eigenvalues, \eg $\overline{\sfA}_{z,-}$, such that $\overline{\sfA}_{z} = \overline{\sfA}_{z,+} + \overline{\sfA}_{z,-}$. We follow \citep{Mattsson_2017} and apply the Lax--Friedrichs splitting in the form
\[
  \overline{\sfA}_{z, \pm} = \frac12 \big( \overline{\sfA}_{z} \pm k_s \alpha_{\overline{\sfA}_{z}} \sfI \big),
\]
where $\alpha_{\overline{\sfA}_z}$ is the maximum absolute eigenvalue of $\overline{\sfA}_z$, and it can be multiplied by a parameter $k_s\ge1$ to increase numerical dissipation, and $\sfI$ is the identity matrix of compatible size for $\overline{\sfA}_z$. Now, we apply this flux-splitting algorithm to the advection terms of \eqref{eq:sbp_2d2v} and obtain
\[
  \begin{aligned}
    (\overline{\sfA}_{z,+} \sfD_{z,+} + \overline{\sfA}_{z,-} \sfD_{z,-})\vU 
    &= 
      \frac{1}{2}\Big( (\overline{\sfA}_z + k_s \alpha_{\overline{\sfA}_z} \sfI)\sfD_{z,+} + (\overline{\sfA}_z - k_s\alpha_{\overline{\sfA}_z} \sfI)\sfD_{z,-} \Big)\vU \\
    &= 
      \frac{1}{2} \overline{\sfA}_z \big(\sfD_{z,+} + \sfD_{z,-}\big)\vU + \frac{1}{2} k_s\alpha_{\overline{\sfA}_z} \big(\sfD_{z,+} - \sfD_{z,-}\big)\vU \\
    &= 
      \overline{\sfA}_z \sfD_{z} \vU + k_s\alpha_{\overline{\sfA}_z} \sfH_z^{-1}\sfS_z \vU.
  \end{aligned}
\]
Here the matrices $\sfH_z$ and $\sfS_z$ are extended to higher dimensions in the same way as the matrix $\sfD_z$ in \eqref{eq:extension} for $z=\{x,y,v,w\}$.

Collecting the terms after splitting, we obtain a stabilized SBP formulation of the Vlasov--Maxwell equations:
\begin{equation}\label{eq:upwind:sbp_2d2v}
  \begin{aligned}
    \p_t\vU &= 
              \big( 
              \overline{\sfA}_x \sfD_x +
              \overline{\sfA}_y \sfD_y +
              \overline{\sfA}_v \sfD_v +
              \overline{\sfA}_w \sfD_w
              \big)\vU 
    \\
            &+
              k_s\big( 
              \alpha_{\overline{\sfA}_x} \sfH_x^{-1}\sfS_x +
              \alpha_{\overline{\sfA}_y} \sfH_y^{-1}\sfS_y +
              \alpha_{\overline{\sfA}_v} \sfH_v^{-1}\sfS_v +
              \alpha_{\overline{\sfA}_w} \sfH_w^{-1}\sfS_w
              \big)\vU + \overline{\sfF}, \quad t > 0, \\
    \vU(0) &= \vU_{0}.
  \end{aligned}
\end{equation}

It is convenient to combine central difference and stabilization terms in two operators $\calA$ and $\calD$, and write the semi-discrete formulation as
\begin{equation}\label{eq:upwind:sbp_2d2v:short}
  \begin{aligned}
    \p_t\vU &= \calA\vU + \calD\vU + \overline{\sfF}, \quad t >0, \\
    \vU(0) &= \vU_{0}.
  \end{aligned}
\end{equation}

Discrete charge and current densities are computed as
\begin{equation}\label{eq:disc:rho:J}
  \brho := (\vone_{m_v \cdot m_w}, \vf)_{\sfH_{m_v}\otimes\sfH_{m_w}}
  \mbox{ and }
  \vJ := (\vv, \vf)_{\sfH_{m_v}\otimes\sfH_{m_w}}.
\end{equation}

\begin{theorem}\label{thm:semi:cons}
  The semi-discrete upwind SBP approximation of the Vlasov--Maxwell equation \eqref{eq:upwind:sbp_2d2v} conserves the mass and momentum. 
\end{theorem}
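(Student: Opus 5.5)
We mimic the continuous proof of the preceding Proposition. Every integration by parts is replaced by the SBP property $\sfH\sfD_1=\sfQ+\sfB$, and the dissipative terms are handled through the symmetry of $\sfS$. Under periodic boundary conditions we take the 1D operators to be periodic. Then $\sfB$ drops out, $\sfQ$ is skew-symmetric, and $\sfS=\sfS^\ttop$ annihilates constants, i.e.\ $\sfS\vone=0$. The last property holds because $\sfD_\pm\vone=0$ by consistency. It follows that $\vone^\ttop\sfH\sfD_1=0$ and $\vone^\ttop\sfS=0$. We also use that $\sfH\sfD_1\vv=\sfQ\vv$ gives $\vv^\ttop\sfH\sfD_1\vg=-(\sfH\sfD_1\vv)^\ttop\vg$, and that $\sfD_1\vv=\vone$ holds exactly for linear functions, with periodicity understood modulo the usual caveat discussed below. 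All tensor-product identities follow from \eqref{eq:extension} and $\sfH_\Omega$.

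\textbf{Mass.} We take the $\sfH_\Omega$ inner product of the $f$-component of \eqref{eq:upwind:sbp_2d2v} with $\vone$. The term $-\operatorname{diag}(\vv)\sfD_x\vf$ is handled first. Since $\operatorname{diag}(\vv)$ acts only in the $v$-slot, it commutes with $\sfD_x$ and with $\sfH_x$, so the contribution is $-(\vv\otimes\ldots)^\ttop\sfH_\Omega\sfD_x\vf$. This vanishes because the $x$-factor is $\vone^\ttop\sfH_{m_x}\sfD_1=0$. The same argument handles $\sfD_y$. The velocity terms are treated next. The coefficient $-(\bE_x+\bw\bB_z)$ does not depend on $v$, so the $v$-factor again reduces to $\vone^\ttop\sfH\sfD_1=0$. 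Likewise, $\bE_y-\bv\bB_z$ is independent of $w$. The stabilization terms $\alpha_z\sfH_z^{-1}\sfS_z\vf$ give $\alpha_z\vone^\ttop\sfS_z\vf=0$, provided $\alpha_z$ is a global scalar. This is the case for the Lax--Friedrichs choice, where $\alpha$ is the maximum eigenvalue. The $f$-row of $\overline{\sfF}$ is zero, so the total mass is conserved.

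\textbf{Momentum.} We multiply the $f$-equation by $\vv$, using the $\sfH_\Omega$ inner product, and compute term by term. The $x$- and $y$-advection terms vanish exactly as for mass. The $w$-derivative term tested with $\vv$ vanishes because $\vv$ is constant in $w$, and the $v$-derivative term tested with $\vw$ vanishes analogously. The $v$-derivative term tested with $\vv$ is the only genuine contribution. Using the SBP adjoint identity in $v$ and $\sfD_1\vv=\vone$, it becomes $\sum_{\bx}\sfH_\bx(\bE_x\brho+\bB_z\vJ_y)$, where $\brho$ and $\vJ$ are the discrete moments \eqref{eq:disc:rho:J}. The $w$-term similarly gives $\sum\sfH_\bx(\bE_y\brho-\bB_z\vJ_x)$. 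The dissipation terms tested with $\vv$ give $\alpha_v\vv^\ttop\sfS_v\vf$ in the $v$-direction. This term is zero only if $\sfS\vv=0$, i.e.\ if the upwind dissipation annihilates linear functions. That holds for the interior stencils because $\sfD_\pm$ are exact on linears, so $\sfS\vv=\tfrac{1}{2}\sfH(\sfD_+-\sfD_-)\vv=0$.

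\textbf{Field side.} For the electromagnetic momentum we compute $\p_t\sum\sfH_\bx\bE_y\bB_z$ from the Maxwell rows of \eqref{eq:upwind:sbp_2d2v}. The current terms give $-\sum\sfH_\bx\vJ_y\bB_z$, which cancels the particle contribution. The derivative terms are of the form $\bB_z\sfD_x\bB_z$ and $\bE_y(\sfD_y\bE_x-\sfD_x\bE_y)$. Terms like $\bB_z^\ttop\sfH\sfD_x\bB_z$ vanish by skew-symmetry of $\sfQ$. The cross terms must combine with a discrete Gauss law $\sfD_x\bE_x+\sfD_y\bE_y=\brho$ to cancel $\bE_x\brho$. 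If the Maxwell rows also receive the $\sfS$ dissipation, those contributions have to be shown to cancel or be controlled.

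\textbf{Main obstacle.} We expect this field-side step, together with $\sfS\vv=0$ near boundaries and the fact that $\vv$ is not periodic, to be the main obstacle. Exact cancellation of the $\bE\brho$ term requires a discretely satisfied Gauss law, and the dissipation on the fields need not commute appropriately. We would first try to establish the field identity exactly. If it fails, we would state momentum conservation up to the Gauss-law defect and truncation error, consistent with the abstract's claim that momentum is preserved "up to truncation error". Mass conservation remains exact.
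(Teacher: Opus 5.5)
Your route is the paper's. You test with $\vone_f$, $\vv_f$, $\vw_f$, remove the spatial advection and the $\sfS$-terms by SBP and consistency, reduce the $v$- and $w$-terms (via the SBP adjoint identity and $\sfD_1\vv=\vone$) to the right-hand side of \eqref{eq:mom:rel}, and close with the Maxwell rows and Gauss' law. Your mass argument is complete. It is also more explicit than the paper's about why each coefficient commutes with its $\sfD_z$ and why $\alpha_{\overline{\sfA}_z}$ must be a global scalar.

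\textbf{The gap.} You describe the field-side computation but never carry it out, and your fallback (``up to the Gauss-law defect and truncation error'') is weaker than the theorem. The abstract's ``up to truncation error'' refers to the fully discrete $\calO(\tau_n^2)$ result. The semi-discrete theorem claims exact conservation, so there is no time-truncation slack to appeal to. The paper closes this step by two choices: (i) the Maxwell rows are the central SBP discretization without $\sfS$-terms, exactly as in its Forward Euler field update; (ii) $\sfD_x\vE_x+\sfD_y\vE_y=\brho$ is an imposed constraint (Remark~\ref{rem:gauss}). Under (i)--(ii),
\[
\begin{aligned}
\p_t(\vE_y,\vB_z)_{\sfH_{\Omega_\bx}}
&= -(\sfD_x\vB_z,\vB_z)_{\sfH_{\Omega_\bx}} - (\vJ_y,\vB_z)_{\sfH_{\Omega_\bx}} + (\vE_y,\sfD_y\vE_x)_{\sfH_{\Omega_\bx}} - (\vE_y,\sfD_x\vE_y)_{\sfH_{\Omega_\bx}}\\
&= -(\vJ_y,\vB_z)_{\sfH_{\Omega_\bx}} - (\brho,\vE_x)_{\sfH_{\Omega_\bx}}.
\end{aligned}
\]
The first and last terms vanish by skew-symmetry. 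For the third, $(\vE_y,\sfD_y\vE_x)_{\sfH_{\Omega_\bx}} = -(\brho-\sfD_x\vE_x,\vE_x)_{\sfH_{\Omega_\bx}} = -(\brho,\vE_x)_{\sfH_{\Omega_\bx}}$. This cancels the first row of \eqref{eq:mom:rel} exactly, and the second component is analogous. You had every ingredient; what is missing is committing to (i)--(ii) and writing this out.

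\textbf{Your reservations are legitimate.} The paper's one-line closing does not address them.
\begin{itemize}
\item On a periodic velocity grid $\vv$ is a sawtooth, so $\sfD_1\vv=\vone$ and $\sfS\vv=0$ fail in the wrap-around rows. The particle-side identities then hold only up to terms involving $\vf$ within a stencil width of the velocity boundary.
\item If $\calD$ acts on the field components, as the $\sfI_4$-extension of $\sfH_z^{-1}\sfS_z$ literally says, the field momentum picks up $2k_s\alpha_{\overline{\sfA}_x}(\vE_y,\sfH_x^{-1}\sfS_x\vB_z)_{\sfH_{\Omega_\bx}}$ plus its $y$-analogue. This is nonzero in general.
\item If $\calD$ does not act on the fields, integrating the $f$-equation over velocity gives $\p_t\brho+\sfD_x\vJ_x+\sfD_y\vJ_y = k_s(\alpha_{\overline{\sfA}_x}\sfH_x^{-1}\sfS_x+\alpha_{\overline{\sfA}_y}\sfH_y^{-1}\sfS_y)\brho$. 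The propagation argument of Remark~\ref{rem:gauss} then fails, and the discrete Gauss law must be a standing hypothesis rather than a consequence.
\end{itemize}
A complete proof should state these hypotheses explicitly instead of hedging.
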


\begin{proof} We prove each of the properties separately. 

  \noindent {\bf Mass conservation}. 
  Let us define the vector
  \[
    \mathbf {1}_f
    =
    \mathbf {1} \otimes (1, 0, 0, 0)^\ttop.
  \]
  Now, multiplying the vector $\mathbf{1}_f$ to \eqref{eq:upwind:sbp_2d2v} with respect to the discrete inner product on $\sfH_\Omega$, we obtain:
  \[
    \begin{aligned}
      (\mathbf{1}_f, \p_t\vU)_{\sfH_\Omega} &= 
                                              \big(\mathbf{1}_f, 
                                              \big( 
                                              \overline{\sfA}_x \sfD_x +
                                              \overline{\sfA}_y \sfD_y +
                                              \overline{\sfA}_v \sfD_v +
                                              \overline{\sfA}_w \sfD_w
                                              \big)\vU
                                              \big)_{\sfH_\Omega} 
      \\
                                            &+
                                              \big(\mathbf{1}_f, 
                                              k_s\big( 
                                              \alpha_{\overline{\sfA}_x} \sfH_x^{-1}\sfS_x +
                                              \alpha_{\overline{\sfA}_y} \sfH_y^{-1}\sfS_y +
                                              \alpha_{\overline{\sfA}_v} \sfH_v^{-1}\sfS_v +
                                              \alpha_{\overline{\sfA}_w} \sfH_w^{-1}\sfS_w
                                              \big)\vU + \overline{\sfF}
                                              \big)_{\sfH_\Omega}.
    \end{aligned}
  \]

  Observe that $\big(\mathbf{1}_f, \overline{\sfF} \big)_{\sfH_\Omega} = 0$. Then, for each $z\in \{x,y,v,w\}$, we get
  \[
    \begin{aligned}
      \big(\mathbf{1}_f, \overline{\sfA}_z \sfD_z \vU \big)_{\sfH_\Omega} 
      &+
        \big(\mathbf{1}_f, k_s \alpha_{\overline{\sfA}_z} \sfH_z^{-1}\sfS_z \vU   \big)_{\sfH_\Omega} \\
      &=
        -\big(\sfD_z \mathbf{1}_f, \overline{\sfA}_z \vU \big)_{\sfH_\Omega}
        -
        \frac12 k_s \alpha_{\overline{\sfA}_z}
        \big((\sfD_{z,+} - \sfD_{z,-})\mathbf{1}_f, \vU)
        = 0. 
    \end{aligned}
  \]

  Thus,
  \[
    \begin{aligned}
      \p_t(\vone, \vf)_{\sfH_\Omega} = 0.
    \end{aligned}
  \]

  \noindent {\bf Momentum conservation}.  
  Let us define the vectors
  \[
    \begin{aligned}
      \vv_f
      &:=
        (\vv_{\bn})^\ttop_{\bn \in \calN},
        \mbox{ where }
        \vv_{\bn}
        =
        \big(
        v_{\bn},
        0,
        0,
        0
        \big)^\ttop,
    \end{aligned}
  \]
  and
  \[
    \begin{aligned}
      \vw_f
      &:=
        (\vw_{\bn})^\ttop_{\bn \in \calN},
        \mbox{ where }
        \vw_{\bn}
        =
        \big(
        w_{\bn},
        0,
        0,
        0
        \big)^\ttop,
    \end{aligned}
  \]
  We also define the vectors of velocity coordinates:
  \[
    \vv = (v_{\bn})^\ttop_{\bn \in \calN} \mbox{ and }   
    \vw = (w_{\bn})^\ttop_{\bn \in \calN}. 
  \]

  We multiply $\vv_f$ and $\vw_f$ by \eqref{eq:upwind:sbp_2d2v} with respect to the discrete inner product on $\sfH_\Omega$, and using the definition of the discrete charge and current densities \eqref{eq:disc:rho:J}, we obtain:
  \[
    \begin{aligned}
      (\vv_f, \p_t\vU)_{\sfH_\Omega} &= 
                                       \big(\vv_f, 
                                       \big( 
                                       \overline{\sfA}_x \sfD_x +
                                       \overline{\sfA}_y \sfD_y +
                                       \overline{\sfA}_v \sfD_v +
                                       \overline{\sfA}_w \sfD_w
                                       \big)\vU
                                       \big)_{\sfH_\Omega} 
      \\
                                     &+
                                       \big(\vv_f, 
                                       k_s\big( 
                                       \alpha_{\overline{\sfA}_x} \sfH_x^{-1}\sfS_x +
                                       \alpha_{\overline{\sfA}_y} \sfH_y^{-1}\sfS_y +
                                       \alpha_{\overline{\sfA}_v} \sfH_v^{-1}\sfS_v +
                                       \alpha_{\overline{\sfA}_w} \sfH_w^{-1}\sfS_w
                                       \big)\vU + \overline{\sfF}
                                       \big)_{\sfH_\Omega}, \\
      (\vw_f, \p_t\vU)_{\sfH_\Omega} &= 
                                       \big(\vw_f, 
                                       \big( 
                                       \overline{\sfA}_x \sfD_x +
                                       \overline{\sfA}_y \sfD_y +
                                       \overline{\sfA}_v \sfD_v +
                                       \overline{\sfA}_w \sfD_w
                                       \big)\vU
                                       \big)_{\sfH_\Omega} 
      \\
                                     &+
                                       \big(\vw_f, 
                                       k_s\big( 
                                       \alpha_{\overline{\sfA}_x} \sfH_x^{-1}\sfS_x +
                                       \alpha_{\overline{\sfA}_y} \sfH_y^{-1}\sfS_y +
                                       \alpha_{\overline{\sfA}_v} \sfH_v^{-1}\sfS_v +
                                       \alpha_{\overline{\sfA}_w} \sfH_w^{-1}\sfS_w
                                       \big)\vU + \overline{\sfF}
                                       \big)_{\sfH_\Omega}.
    \end{aligned}
  \]

  Let us discuss each term of the first equality separately.   

  \medskip
  \noindent {\em Upwind terms:}
  Observe that 
  $\big(\vv_f, \overline{\sfF} \big)_{\sfH_\Omega} = 0$ and 
  $\big(\vw_f, \overline{\sfF} \big)_{\sfH_\Omega} = 0$, since the nonzero entries of $\vv_f$ and $\bw_f$ lie only in the distribution-function component,
  whereas $\overline{\sfF}$ has support only in the Maxwell components.

  Further, for each $z=\{x,y,v,w\}$, we get
  \[
    \begin{aligned}
      \big(\vv_f, k_s \alpha_{\overline{\sfA}_z} \sfH_z^{-1}\sfS_z \vU   \big)_{\sfH_\Omega} 
      =
      \frac12 k_s \alpha_{\overline{\sfA}_z}
      \big((\sfD_{z,+} - \sfD_{z,-})\vv_f, \vU)
      = 0.
    \end{aligned}
  \]

  \medskip
  \noindent {\em The time-derivative term:}
  \[
    \begin{aligned}
      (\vv_f, \p_t\vU)_{\sfH_\Omega} &= \p_t  (\vv, \vf)_{\sfH_\Omega}, \\
      (\vw_f, \p_t\vU)_{\sfH_\Omega} &= \p_t  (\vw, \vf)_{\sfH_\Omega}.
    \end{aligned}
  \]

  \medskip
  \noindent {\em The advection term on $\bx$:}
  \[
    \begin{aligned}
      \big(\vv_f, 
      \big( 
      \overline{\sfA}_x \sfD_x +
      \overline{\sfA}_y \sfD_y 
      \big)\vU
      \big)_{\sfH_\Omega} = 0,\\
      \big(\vw_f, 
      \big( 
      \overline{\sfA}_x \sfD_x +
      \overline{\sfA}_y \sfD_y
      \big)\vU
      \big)_{\sfH_\Omega} = 0,
    \end{aligned}
  \]
  due to the SBP property and periodic boundary conditions. 

  \medskip
  \noindent {\em The advection term on $\bv$:}
  Thanks to the SBP property and periodic boundary conditions we get:
  \[
    \begin{aligned}
      \big(\vv_f, 
      \big( 
      \overline{\sfA}_v \sfD_v +
      \overline{\sfA}_w \sfD_w 
      \big)\vU
      \big)_{\sfH_\Omega} 
      &=
        \big(\vv_f, 
        \overline{\sfA}_v \sfD_v
        \vU
        \big)_{\sfH_\Omega}, \\ 
      \big(\vw_f, 
      \big( 
      \overline{\sfA}_v \sfD_v +
      \overline{\sfA}_w \sfD_w 
      \big)\vU
      \big)_{\sfH_\Omega} 
      &=
        \big(\vw_f, 
        \overline{\sfA}_w \sfD_w
        \vU
        \big)_{\sfH_\Omega}.  
    \end{aligned}
  \]
  The first integral can be written as
  \[
    \big(\vv_f, 
    \overline{\sfA}_v \sfD_v
    \vU
    \big)_{\sfH_\Omega}
    =
    \big(\vv, 
    \overline{\sfE_x}\ \sfD_v
    \vf
    \big)_{\sfH_\Omega}
    +
    \big(\vv, 
    \big( 
    \overline{w\sfB_z}\ \sfD_v
    \big)\vf
    \big)_{\sfH_\Omega},
  \] 
  where
  \[
    \begin{aligned}
      \overline{\sfE_x} &= \operatorname{diag}(-\bE_x) \otimes \sfI_{m_{v}} \otimes \sfI_{m_{w}}, \\
      \overline{w\sfB_z} &= \operatorname{diag}(-\bB_z) \otimes \sfI_{m_v} \otimes \operatorname{diag}(\bw).
    \end{aligned}  \]

  Now, integrating by parts the last equality and using definitions of charge and current densities from \eqref{eq:disc:rho:J}, we obtain
  \[
    \big(\vv_f, 
    \overline{\sfA}_v \sfD_v
    \vU
    \big)_{\sfH_\Omega}
    =
    \big( \vE_x, \brho \big)_{\sfH_{\Omega_\bx}} 
    + 
    \big( \vB_z, \vJ_y \big)_{\sfH_{\Omega_\bx}},
  \]
  where $\vE_x$, $\vB_z$ and $\vJ_y$ are vectors containing the values of $E_x$, $B_z$ and $J_y$ on the nodal points. 

  Following a similar argument, we obtain 
  \[
    \big(\vw_f, 
    \overline{\sfA}_w \sfD_w
    \vU
    \big)_{\sfH_\Omega}
    =
    \big( \vE_y, \brho \big)_{\sfH_{\Omega_\bx}} 
    - 
    \big( \vB_z, \vJ_x \big)_{\sfH_{\Omega_\bx}}.
  \]

  Collecting all terms, 
  \begin{equation}\label{eq:mom:rel}
    \p_t 
    \begin{pmatrix}
      (\vv, \vf)_{\sfH_\Omega} \\
      (\vw, \vf)_{\sfH_\Omega}
    \end{pmatrix}
    =
    \begin{pmatrix}
      \big( \vE_x, \brho \big)_{\sfH_{\Omega_\bx}} 
      + 
      \big( \vB_z, \vJ_y \big)_{\sfH_{\Omega_\bx}} \\
      \big( \vE_y, \brho \big)_{\sfH_{\Omega_\bx}} 
      - 
      \big( \vB_z, \vJ_x \big)_{\sfH_{\Omega_\bx}}
    \end{pmatrix}.
  \end{equation}

  Using the Maxwell equations and Gauss' laws, we can repeat as in \eqref{eq:elec:mom} to complete the momentum conservation. 
\end{proof}

\begin{theorem}\label{thm:semi:l2}
  The semi-discrete upwind SBP approximation of the Vlasov--Maxwell equation
  \eqref{eq:upwind:sbp_2d2v} satisfies the discrete  $L^2$ stability estimate for the distribution function:
  \[
    \partial_t \|\vf\|_{\sfH_\Omega}^2 \le 0.
  \]
\end{theorem}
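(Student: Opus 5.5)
The plan is to take the $\sfH_\Omega$-inner product of the $f$-component of \eqref{eq:upwind:sbp_2d2v} with $\vf$ itself, i.e.\ test against the vector $\vU$ restricted to the distribution-function slot. This gives
\[
  \tfrac12\p_t\|\vf\|^2_{\sfH_\Omega}
  = \sum_{z\in\{x,y,v,w\}}\Big[(\vf, a_z \sfD_z\vf)_{\sfH_\Omega}
    + k_s\alpha_{\overline{\sfA}_z}(\vf,\sfH_z^{-1}\sfS_z\vf)_{\sfH_\Omega}\Big],
\]
where $a_z$ is the diagonal matrix of the variable coefficient in the $f$-row of $\overline{\sfA}_z$: $-\vv$, $-\vw$, $-(\vE_x+\vw\vB_z)$ and $-(\vE_y-\vv\vB_z)$, respectively. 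The source $\overline{\sfF}$ does not contribute, because it lives only in the Maxwell components, exactly as in the proof of Theorem~\ref{thm:semi:cons}. I will then treat the central and dissipative parts separately.

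For the dissipative part, $\sfH_\Omega$ and $\sfH_z$ are diagonal tensor products that commute with each other. Hence
\[
  (\vf,\sfH_z^{-1}\sfS_z\vf)_{\sfH_\Omega}=\vf^\ttop \widetilde{\sfH}_z\sfS_z\vf ,
\]
where $\widetilde{\sfH}_z$ is the product of the quadrature matrices in the three directions other than $z$. This is a tensor product of a positive diagonal matrix with the negative semidefinite $\sfS$, so it is $\le 0$. Since $k_s\alpha_{\overline{\sfA}_z}\ge0$, each such term is nonpositive.

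For the central part, the key observation is that in every direction $z$ the coefficient $a_z$ does not depend on the coordinate $z$. Indeed, $v$ and $w$ are independent of $x$ and $y$; $E_x+wB_z$ is independent of $v$; and $E_y-vB_z$ is independent of $w$. This is the discrete counterpart of the divergence-free phase-space flow. Consequently $a_z$ commutes with $\sfD_z$ and with $\sfH_z$, and restricted to each 1D line in direction $z$ it acts as a scalar $c$. On such a line the contribution is
\[
  c\,\vf^\ttop\sfH\sfD_1\vf=c\,\vf^\ttop(\sfQ+\sfB)\vf=c\,\vf^\ttop\sfB\vf ,
\]
because $\sfQ$ is skew-symmetric. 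Under periodic boundary conditions the boundary term vanishes: either the periodic operators have $\sfB$ replaced by zero, or $f_0=f_m$ makes $\vf^\ttop\sfB\vf=0$. Summing over lines with the positive weights from the other quadrature matrices shows that all central terms vanish identically.

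Combining the two parts gives $\p_t\|\vf\|^2_{\sfH_\Omega}\le 0$.

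The main obstacle is making the commutation step for the $v$ and $w$ terms rigorous in the Kronecker notation of $\overline{\sfA}_v$ and $\overline{\sfA}_w$. One must verify that the diagonal factor sitting in the $v$-slot of $\overline{\sfA}_v$ is the identity, and likewise for $w$, so that $a_v\sfD_v=\sfD_v a_v$. I would check this line by line on the grid rather than through the Kronecker formulas.
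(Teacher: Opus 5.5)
Your proposal is correct and follows the paper's route. You test the $f$-component against $\vf$ (the paper's $\vf_f=\vf\otimes(1,0,0,0)^\ttop$). The central advection term then vanishes by the SBP property with periodic boundaries, and the upwind term is nonpositive because $\sfS_z\le 0$.

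Your extra detail makes explicit what the paper asserts in one line as ``skew-symmetric'': each $f$-row coefficient is independent of its own differentiation variable, and the dissipative term factors as $\widetilde{\sfH}_z\sfS_z$. The Kronecker check you flag as the main obstacle is immediate, since in the $f$-row of every $\overline{\sfA}_z$ the factor in the $z$-slot is the identity.
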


\begin{proof}
  Define
  \[
    \vf_f
    =
    \vf\otimes (1,0,0,0)^\ttop.
  \]

  Multiplying \eqref{eq:upwind:sbp_2d2v:short} by $\vf_f$ with respect to the discrete inner product on $\sfH_\Omega$ gives
  \[
    (\vf_f,\p_t\vU)_{\sfH_\Omega}
    =
    (\vf_f,\calA\vU)_{\sfH_\Omega}
    +
    (\vf_f,\calD\vU)_{\sfH_\Omega}.
  \]

  Using the SBP property and periodic boundary conditions, the advection contribution is skew-symmetric and therefore vanishes:
  \[
    (\vf_f,\calA\vU)_{\sfH_\Omega}=0.
  \]

  For the upwind stabilization terms, using \eqref{eq:upwind_identity}, we obtain
  \[
    (\vf_f,\calD\vU)_{\sfH_\Omega}
    \le 0,
  \]
  since $\sfS_z$ is negative semidefinite for each $z\in\{x,y,v,w\}$.

  Moreover,
  \[
    (\vf_f,\p_t\vU)_{\sfH_\Omega}
    =
    \frac12
    \p_t
    \|\vf\|_{\sfH_\Omega}^2.
  \]

  Combining these identities yields
  \[
    \p_t
    \|\vf\|_{\sfH_\Omega}^2
    \le 0,
  \]
  which proves the result.
\end{proof}

\begin{remark}[Total energy dissipation]
  Using an argument similar to that in the previous theorems, one can show that the total energy is conserved for the central-difference SBP scheme \eqref{eq:sbp_2d2v}. However, the upwind stabilization terms in \eqref{eq:upwind:sbp_2d2v} introduce dissipation and therefore cause the total energy to decrease.
\end{remark}

\begin{remark}[Discrete Gauss law]\label{rem:gauss}
  The reduced Maxwell system is supplemented by Gauss' law
  \[
    \partial_x E_x+\partial_y E_y=\rho .
  \]
  In the semi-discrete SBP formulation, we impose the corresponding discrete constraint
  \[
    \sfD_x \vE_x+\sfD_y \vE_y=\brho .
  \]
  This constraint is propagated by the semi-discrete equations. Indeed, applying
  \(\sfD_x\) to the equation for \(\vE_x\) and \(\sfD_y\) to the equation for
  \(\vE_y\), and using the commutativity of tensor-product derivative operators, gives
  \[
    \partial_t(\sfD_x\vE_x+\sfD_y\vE_y)
    =
    -(\sfD_x\vJ_x+\sfD_y\vJ_y).
  \]
  On the other hand, taking the velocity integral of the semi-discrete Vlasov equation gives the discrete continuity equation
  \[
    \partial_t\brho+\sfD_x\vJ_x+\sfD_y\vJ_y=0.
  \]
  Therefore,
  \[
    \partial_t(\sfD_x\vE_x+\sfD_y\vE_y-\brho)=0.
  \]
  Thus, if the discrete Gauss law is satisfied initially, it remains satisfied for all time at the semi-discrete level.
\end{remark}

\section{Fully discrete approximation}
\label{Sec:fully-discrete}
In this section, we present a fully discrete approximation of the Vlasov--Maxwell system. In our numerical validations, we employ the fourth order, five-stage
strong stability preserving explicit Runge--Kutta (SSP-RK(5,4)) method of \cite{Kraaijevanger_1991} in time. Since SSP methods are by construction convex combinations of Forward Euler steps, it is sufficient to present the method and establish conservation of mass and momentum for Forward Euler.

\subsection{Forward Euler time discretization}
Let us denote the current time by $t^n \ge 0$, the current time-step by $\tau_n = t^{n+1} - t^n$, and discretize the time interval by $0=t^0 < t^1 < \ldots < t^N = T$. Let $\vU^n \approx \vU(t^n)$ be the finite difference approximation of the solution at time $t^n$. The Forward Euler discretization of \eqref{eq:upwind:sbp_2d2v:short} is given by: 
\begin{equation}\label{eq:fe}
  \frac{\vU^{n+1}-\vU^{n}}{\tau_n} = \calA \vU^n + \calD \vU^n + \overline{\sfF}^n, \quad n=0,1,\ldots, N.
\end{equation}

The time-step is computed by the following CFL condition
\begin{equation}\label{eq:cfl}
  \tau_n = \textrm{cfl} \min_{z\in \{x,y,v,w\}} \frac{h_z}{\alpha_{\overline{\sfA}_z}},
\end{equation}
where $\textrm{cfl}>0$ is the CFL number. 

Let us define the total mass and momentum at time $t^n$ by:
\[
  M^n := (\vone,\vf^n)_{\sfH_\Omega}, \mbox{ and } 
  \bP^n = 
  \begin{pmatrix}
    P_x^n \\
    P_y^n
  \end{pmatrix}
  :=
  \begin{pmatrix}
    (\vv,\vf^n)_{\sfH_\Omega} + (E_y^n,B_z^n)_{\sfH_{\Omega_x}} \\
    (\vw,\vf^n)_{\sfH_\Omega} - (E_x^n,B_z^n)_{\sfH_{\Omega_x}}
  \end{pmatrix}.
\]

\begin{theorem}
  Under the Forward Euler discretization \eqref{eq:fe}, the fully discrete upwind SBP scheme
  has the following conservation properties:
  \[
    M^{n+1}=M^n,
    \qquad
    \bP^{n+1}=\bP^n + \calO(\tau_n^2).
  \]
\end{theorem}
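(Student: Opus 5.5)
The plan is to reduce both statements to the semi-discrete identities established in the proof of Theorem~\ref{thm:semi:cons}, exploiting that Forward Euler is a linear update. The only new ingredient is the quadratic (bilinear) structure of the electromagnetic momentum.

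\textbf{Mass.} I would take the $\sfH_\Omega$-inner product of \eqref{eq:fe} with $\mathbf{1}_f$. This gives
\[
  M^{n+1}-M^n=\tau_n\big(\mathbf{1}_f,\calA\vU^n+\calD\vU^n+\overline{\sfF}^n\big)_{\sfH_\Omega}.
\]
The right-hand side is exactly the expression shown to vanish in the mass part of Theorem~\ref{thm:semi:cons}. That argument is purely algebraic and holds for any grid vector $\vU$, so it applies at $\vU^n$. The reasons are:
\begin{itemize}
\item the SBP property together with $\sfD_z\mathbf{1}_f=0$ under periodicity;
\item $(\sfD_{z,+}-\sfD_{z,-})\mathbf{1}_f=0$;
\item $\overline{\sfF}$ has no $f$-component.
\end{itemize}
Hence $M^{n+1}=M^n$ exactly.

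\textbf{Particle momentum.} Pairing \eqref{eq:fe} with $\vv_f$ and $\vw_f$ and reusing the algebraic identity \eqref{eq:mom:rel} evaluated at time level $n$ gives
\[
  (\vv,\vf^{n+1})_{\sfH_\Omega}-(\vv,\vf^n)_{\sfH_\Omega}=\tau_n\big[(\vE_x^n,\brho^n)_{\sfH_{\Omega_\bx}}+(\vB_z^n,\vJ_y^n)_{\sfH_{\Omega_\bx}}\big].
\]
The analogous formula holds for $\vw$. This step is exact.

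\textbf{Field momentum (the main obstacle).} The term $(E_y,B_z)$ is bilinear, so a one-step update produces a cross term. Write $\delta\vE_y=\vE_y^{n+1}-\vE_y^n$, and similarly for the other fields. Then
\[
  (\vE_y^{n+1},\vB_z^{n+1})-(\vE_y^{n},\vB_z^{n})=(\delta\vE_y,\vB_z^n)+(\vE_y^n,\delta\vB_z)+(\delta\vE_y,\delta\vB_z).
\]
The first two terms equal $\tau_n$ times the semi-discrete field-momentum rate evaluated at $\vU^n$. I would show this rate equals $-(\vE_x^n,\brho^n)-(\vB_z^n,\vJ_y^n)$ using the discrete Maxwell equations and the following tools:
\begin{itemize}
\item the SBP skew-symmetry $(\vu,\sfD_z\vw)=-(\sfD_z\vu,\vw)$ under periodicity;
\item commutativity of tensor-product operators;
\item the discrete Gauss law $\sfD_x\vE_x+\sfD_y\vE_y=\brho$ from Remark~\ref{rem:gauss}.
\end{itemize}
This is the discrete counterpart of \eqref{eq:elec:mom}. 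I expect this to be the delicate step, for two reasons. First, the discrete analogue of $E\cdot\nabla E$-type terms must telescope, which requires product-rule-free identities: since $\sfD$ has no exact product rule, I would check whether terms such as $(\vE_x,\sfD_x\vE_y)$-combinations cancel by skew-symmetry alone, or only up to truncation error. Second, the stabilization terms $\calD$ acting on the Maxwell components (since $\alpha_{\overline{\sfA}_x}\ne0$ there) must be handled. If either piece only cancels up to truncation error, I would state this explicitly, consistent with the abstract's ``up to truncation error.''

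\textbf{Remainder.} With these cancellations the $\calO(\tau_n)$ contributions cancel against the particle momentum update. What remains is
\[
  (\delta\vE_y,\delta\vB_z)_{\sfH_{\Omega_\bx}}=\tau_n^2\big((\calA\vU^n+\calD\vU^n+\overline{\sfF}^n)_{E_y},(\calA\vU^n+\calD\vU^n)_{B_z}\big),
\]
which is $\calO(\tau_n^2)$ for bounded discrete solutions. The same holds for the $y$-component. This yields $\bP^{n+1}=\bP^n+\calO(\tau_n^2)$. Because SSP-RK is a convex combination of Forward Euler steps, the same statements transfer stagewise.
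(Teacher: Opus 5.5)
Your route is the same as the paper's: pair \eqref{eq:fe} with $\mathbf{1}_f$ for mass, pair with $\vv_f,\vw_f$ and reuse \eqref{eq:mom:rel} at $t^n$ for the particle momentum, then expand the bilinear field momentum so that only a $\tau_n^2$ cross term remains. The gap is that you leave the $\calO(\tau_n)$ cancellation in the field momentum as a conditional (``exact, or only up to truncation error''). That cancellation is the entire content of the momentum claim. It is exact. It needs no product rule and no commutativity, only skew-adjointness of $\sfD_z$ in the $\sfH$ inner product (periodic case) and the discrete Gauss law at $t^n$. With all inner products in $\sfH_{\Omega_\bx}$,
\[
  (\delta\vE_y,\vB_z^n)+(\vE_y^n,\delta\vB_z)
  =\tau_n\big[-(\sfD_x\vB_z^n,\vB_z^n)-(\vJ_y^n,\vB_z^n)+(\vE_y^n,\sfD_y\vE_x^n)-(\vE_y^n,\sfD_x\vE_y^n)\big].
\]
The first and last terms vanish by skew-symmetry, and $(\vE_y^n,\sfD_y\vE_x^n)=-(\sfD_y\vE_y^n,\vE_x^n)=-(\brho^n-\sfD_x\vE_x^n,\vE_x^n)=-(\brho^n,\vE_x^n)$. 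Hence the bracket equals $-(\vJ_y^n,\vB_z^n)-(\brho^n,\vE_x^n)$, which cancels the particle-momentum increment exactly. The $y$-component works the same way via $(\vE_x^n,\sfD_x\vE_y^n)=-(\brho^n,\vE_y^n)$.

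You flag, but do not settle, whether the $\calD$ terms act on the Maxwell components. The paper decides this by convention. Its proof advances the fields by $\vE_y^{n+1}=\vE_y^n+\tau_n(-\sfD_x\vB_z^n-\vJ_y^n)$ and $\vB_z^{n+1}=\vB_z^n+\tau_n(\sfD_y\vE_x^n-\sfD_x\vE_y^n)$, i.e.\ without stabilization, and the theorem as stated holds only under that choice. If you keep $\calD$ on $\vE_y,\vB_z$, the bracket above acquires the extra term $2k_s\sum_{z\in\{x,y\}}\alpha_{\overline{\sfA}_z}(\vE_y^n,\sfH_z^{-1}\sfS_z\vB_z^n)_{\sfH_{\Omega_\bx}}$; the two contributions coincide because $\sfS_z$ is symmetric. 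This term is nonzero in general, so your fallback would only give $\calO(\tau_n^2)$ plus an $\calO(\tau_n)$ dissipation-sized term, which is weaker than the theorem. You must therefore commit to the unstabilized field update.

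You should also state the discrete Gauss law at $t^n$ explicitly as a hypothesis, as the paper in effect does. The continuity equation in Remark~\ref{rem:gauss} omits the velocity moment $k_s(\alpha_{\overline{\sfA}_x}\sfH_x^{-1}\sfS_x+\alpha_{\overline{\sfA}_y}\sfH_y^{-1}\sfS_y)\brho$ of the upwind terms acting on $\vf$. With unstabilized fields, the constraint is therefore not propagated exactly. Keeping $\calD$ on the fields would restore that propagation but spoil the momentum cancellation.
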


\begin{proof}
  \noindent {\bf Mass conservation}. 
  Multiplying \eqref{eq:fe} by $\vone_f$ with respect to the discrete inner product on $\sfH_\Omega$, we obtain
  \[
    \frac{1}{\tau_n}(\vone_f, (\vU^{n+1} -\vU^{n}))_{\sfH_\Omega} 
    =   
    (\vone_f, \calA \vU^n + \calD \vU^n + \overline{\sfF}^n)_{\sfH_\Omega}.
  \]
  Again applying integration by parts together with periodic boundary conditions gives that the right hand side is zero. Therefore,
  \[
    (\vone_f, \vU^{n+1})_{\sfH_\Omega}
    = 
    (\vone_f, \vU^{n})_{\sfH_\Omega},
  \]
  or
  \[
    (\vone, \vf^{n+1})_{\sfH_\Omega}
    = 
    (\vone, \vf^{n})_{\sfH_\Omega}.
  \]

  \noindent {\bf Momentum conservation}. 
  We perform the proof for the first component of the total momentum. We have
  \[
    \begin{aligned}
      P^{n+1}_x - P^n_x 
      &= 
        \Big( 
        (\vv,\vf^{n+1})_{\sfH_\Omega} + (\vE_y^{n+1}, \vB_z^{n+1})_{\sfH_{\Omega_x}} 
        \Big)
        -
        \Big(
        (\vv,\vf^n)_{\sfH_\Omega} + (\vE_y^n, \vB_z^n)_{\sfH_{\Omega_x}}
        \Big) \\
      &=
        \Big( 
        (\vv,\vf^{n+1})_{\sfH_\Omega} -  (\vv,\vf^{n})_{\sfH_\Omega}
        \Big)
        +
        \Big(
        (\vE_y^{n+1}, \vB_z^{n+1})_{\sfH_{\Omega_x}} - (\vE_y^{n}, \vB_z^{n})_{\sfH_{\Omega_x}}
        \Big) \\
      &:=
        I_1 + I_2.
    \end{aligned}
  \]

  Multiplying \eqref{eq:fe} by $\vv_f$ with respect to the discrete inner product on $\sfH_\Omega$, we obtain:
  \[
    (\vv_f,\vU^{n+1})_{\sfH_\Omega} - (\vv_f,\vU^{n})_{\sfH_\Omega}
    =
    \tau_n
    (\vv_f, \calA \vU^n + \calD \vU^n + \overline{\sfF}^n).
  \]
  We have established earlier that the right-hand-side of this equality becomes the first row of \eqref{eq:mom:rel}. And the left-hand-side can be simplified:
  \[
    (\vv,\vf^{n+1})_{\sfH_\Omega} - (\vv,\vf^{n})_{\sfH_\Omega}
    =
    \tau_n 
    \Big(
    \big( \vE^n_x, \brho^n \big)_{\sfH_{\Omega_\bx}} 
    + 
    \big( \vB^n_z, \vJ^n_y \big)_{\sfH_{\Omega_\bx}}
    \Big),
  \]
  thus
  \[
    I_1 
    = 
    \tau_n 
    \Big(
    \big( \vE^n_x, \brho^n \big)_{\sfH_{\Omega_\bx}} 
    + 
    \big( \vB^n_z, \vJ^n_y \big)_{\sfH_{\Omega_\bx}}
    \Big).
  \]

  For the second term $I_2$, we first discretize the Maxwell's equations in \eqref{eq:2d2v} using Forward Euler method:
  \[
    \begin{aligned}
      \vE^{n+1}_y &= \vE^{n}_y + \tau_n \big(-\sfD_x \vB^n_z - \vJ^n_y \big), \\
      \vB^{n+1}_z &= \vB^{n}_z + \tau_n \big( \sfD_y \vE^n_x - \sfD_x\vE^n_y \big). \\
    \end{aligned}  
  \]

  Therefore, we get
  \[
    \begin{aligned}
      I_2 
      &= 
        \big(
        \vE^{n}_y + \tau_n \big(-\sfD_x \vB^n_z - \vJ^n_y \big),\
        \vB^{n}_z + \tau_n \big( \sfD_y \vE^n_x - \sfD_x\vE^n_y \big)
        \big)_{\sfH_{\Omega_x}}
        - (\vE_y^{n}, \vB_z^{n})_{\sfH_{\Omega_x}} \\
      &=
        -\tau_n(\sfD_x\vB^n_z, \vB^n_z)_{\sfH_{\Omega_x}} 
        -\tau_n(\vJ^n_y, \vB^n_z)_{\sfH_{\Omega_x}} \\
      &\hspace{1in} +\tau_n(\vE^n_y, \sfD_y\vE^n_x)_{\sfH_{\Omega_x}}
        -\tau_n(\vE^n_y, \sfD_x\vE^n_y)_{\sfH_{\Omega_x}}
        +\calO(\tau_n^2).
    \end{aligned}
  \]

  The first and the last product terms vanish after integration by parts. The third term can be written
  \[
    \begin{aligned}
      \tau_n(\vE^n_y, \sfD_y\vE^n_x)_{\sfH_{\Omega_x}}
      &=
        -\tau_n(\sfD_y \vE^n_y, \vE^n_x)_{\sfH_{\Omega_x}} \\
      &=
        \tau_n(\brho^n, \vE^n_x)_{\sfH_{\Omega_x}} 
        - \tau_n(\sfD_x \vE^n_x, \vE^n_x)_{\sfH_{\Omega_x}} \\
      &=
        \tau_n(\brho^n, \vE^n_x)_{\sfH_{\Omega_x}},
    \end{aligned}
  \]
  where we performed integration by parts and used Gauss' law. 

  Now, collecting the remaining terms gives us
  \[
    I_2 
    = 
    -\tau_n(\vJ^n_y, \vB^n_z)_{\sfH_{\Omega_x}} 
    -\tau_n(\brho^n, \vE^n_x)_{\sfH_{\Omega_x}}
    + \calO(\tau_n^2).
  \]

  Finally, 
  \[
    P^{n+1}_x - P^n_x =  I_1 + I_2 = \calO(\tau_n^2).
  \] 

  In exactly the same way we obtain
  \[
    P^{n+1}_y - P^n_y =  \calO(\tau_n^2).
  \] 

  Thus, the Forward Euler method conserves discrete momentum up to truncation error. 
\end{proof}

\begin{remark}[Conservation of the $L^2$-norm and total energy]
  Although we observe numerically that the $L^2$-norm remains stable, a rigorous proof is challenging. In particular, the upwind dissipation may vanish when the solution is constant or nearly constant, which prevents establishing a uniform decay estimate. 

  Moreover, due to the added numerical stabilization, the total energy is no longer conserved and instead decays in time, which is also confirmed by the numerical experiments.
\end{remark}

\begin{remark}[Discrete Gauss law]
  The reduced Maxwell system is supplemented by Gauss' law
  \[
    \partial_x E_x+\partial_y E_y=\rho .
  \]
  In the semi-discrete SBP formulation, we impose the corresponding discrete constraint
  \[
    \sfD_x \vE_x+\sfD_y \vE_y=\brho .
  \]
  This constraint is propagated by the semi-discrete equations. Indeed, applying
  \(\sfD_x\) to the equation for \(\vE_x\) and \(\sfD_y\) to the equation for
  \(\vE_y\), and using the commutativity of tensor-product derivative operators, gives
  \[
    \partial_t(\sfD_x\vE_x+\sfD_y\vE_y)
    =
    -(\sfD_x\vJ_x+\sfD_y\vJ_y).
  \]
  On the other hand, taking the velocity integral of the semi-discrete Vlasov equation gives the discrete continuity equation
  \[
    \partial_t\brho+\sfD_x\vJ_x+\sfD_y\vJ_y=0.
  \]
  Therefore,
  \[
    \partial_t(\sfD_x\vE_x+\sfD_y\vE_y-\brho)=0.
  \]
  Thus, if the discrete Gauss law is satisfied initially, it remains satisfied for all time at the semi-discrete level.
\end{remark}

\section{GPU implementation}
\label{Sec:gpu}
One of the advantages of the finite difference methods as presented in this paper is a diagonal mass matrix, in contrast to finite element approximations of Vlasov--Maxwell presented in \citep{Kormann_2025}. However, assembling and storing the coefficient matrices as well as SBP operators in \eqref{eq:sbp_2d2v} is expensive and impractical in high dimensions. Instead, we use a matrix-free GPU implementation in this work, where the SBP operators are applied to the variables directly through their local stencil coefficients. Below, we present details of our GPU implementation.

We start by storing the distribution function as a one-dimensional array corresponding to the flattened four-dimensional grid
\[
  (i,j,k,l) \longleftrightarrow x_i,y_j,v_k,w_l .
\]
The electromagnetic fields $E_x$, $B_z$, and $E_y$ depend only on the spatial variables and are therefore stored on the two-dimensional grid $(x_i,y_j)$. This separation substantially reduces memory usage for the field variables.

The finite difference operators are implemented in stencil form. For example, a derivative in the $x$-direction is evaluated as
\[
  (\sfD_z f)_{i,j,k,l}
  =
  \sum_{r=-p}^{p} c_r^{(z)}
  f_{i+r,j,k,l},
\]
where periodic indexing is used. Analogous stencil evaluations are used in the $y$, $v$, and $w$ directions. The stencil coefficients are copied to CUDA constant memory, allowing all GPU threads to access the same coefficients efficiently. Both central and upwind SBP stencil coefficients are stored in this way.

The right-hand side of the Vlasov equation is evaluated by assigning GPU threads to phase space grid points. The Lorentz-force terms are then evaluated locally using the field values at the corresponding spatial point. Thus, the semi-discrete Vlasov equation is evaluated without forming the global matrices
\[
  \overline{\sfA}_x\sfD_x,\quad
  \overline{\sfA}_y\sfD_y,\quad
  \overline{\sfA}_v\sfD_v,\quad
  \overline{\sfA}_w\sfD_w .
\]
Similarly, the corresponding upwind operator $\sfD_z$ is evaluated as
\[
  (\sfD_{z,\pm} f)_{i,j,k,l}
  =
  \sum_{r=-p}^{p} c_r^{(z,\pm)}
  f_{i+r,j,k,l},
\]
and then the upwind dissipation term is computed pointwise as
\[
  k_s \alpha_{\overline{\sfA}_z} \sfH^{-1}_z \sfS_z f
  =
  \frac12 k_s \alpha_{\overline{\sfA}_z} 
  \big(
  \sfD_{z,+} - \sfD_{z,-}
  \big)f,
\]
without ever constructing the operators $\sfH_z$ and $\sfS_z$.

The current density is computed simultaneously with the Vlasov right-hand side. For each spatial point, the moments
\[
  J_{x,i,j}%
  =
  \sum_{k,l} \omega_k \omega_l \, v_k \, f_{i,j,k,l},
  \quad
  J_{y,i,j}%
  =
  \sum_{k,l} \omega_k \omega_l \, w_l \, f_{i,j,k,l},
\]
are approximated by quadrature over the velocity grid. Since many phase space threads contribute to the same spatial current value, these contributions are accumulated using atomic additions on the GPU.

The Maxwell equations are advanced by a separate GPU kernel on the two-dimensional spatial grid. In this kernel, the derivatives of $B_z$, $E_x$, and $E_y$ are computed using the same stencil-based finite difference operators. The update has the form of the Maxwell equations \eqref{eq:2d2v} for the variables $E_x, B_z$ and $E_y$. The implementation uses periodic indexing in both spatial directions.

Time integration is performed using the fourth-order, five-stage strong stability
preserving explicit Runge--Kutta method. At each Runge--Kutta stage, the current density is recomputed from the stage value of $f$, and the Maxwell right-hand side is evaluated using the corresponding stage fields. Intermediate Runge--Kutta arrays are stored on the device, and the final update is performed by separate GPU kernels for the distribution function and the electromagnetic fields.

The implementation is therefore fully matrix-free: only the solution arrays, field arrays, velocity grids, current densities, and Runge--Kutta stage arrays are stored. The differentiation matrices are never assembled. This is essential for high-dimensional simulations, where explicit matrix storage would be prohibitively expensive. The tensor-product SBP structure is used only through local stencil applications, making the method well suited for GPU acceleration.

\section{Numerical experiments}
\label{sec:results}
In this section, the results from solving the 2D2V Vlasov--Maxwell system of equations are presented,  using benchmark-problems. The time-step for all simulations is computed by \eqref{eq:cfl}. 
The value of $\text{CFL}$ is presented separately for each result below.
Unless stated otherwise, all simulations are performed with periodic FD operators of order 6, using $k_s=1$ for the stabilization
and single precision on the GPU.
As for GPU architecture, we run on one NVIDIA L40 from the UPPMAX Pelle cluster. 

\subsection{Verification of solver}
\label{subsec:mms}
We do a convergence study of the 2D2V solver using the method of manufactured solutions (MMS). We choose the distribution and fields to be
\begin{equation}
  \begin{aligned}
    f(\bx , \bv , t) &= A_f\sin{(k_xx)}\sin{(k_yy)}\sin{(k_vv)}\sin{(k_ww)}\cos{(\omega t)}, \\
    E_x(\bx , t) &= A_{E_x}\sin{(k_xx)}\sin{(k_yy)}\cos{(\omega t)}, \\
    B_z(\bx , t) &= A_{B_z}\sin{(k_xx)}\sin{(k_yy)}\cos{(\omega t)}, \\
    E_y(\bx , t) &= A_{E_y}\sin{(k_xx)}\sin{(k_yy)}\cos{(\omega t)},
  \end{aligned}
  \label{eq:MMS_solution}
\end{equation}
where we in the proceeding measurements set all constants $A_z=0.1$, $\omega=1$ and $k_z=2\pi / L_z$, for $z \in \set{x, y, v, w}$. The length $L_z$ is the distance between endpoints for said dimension, which is used to make the solution in \eqref{eq:MMS_solution} periodic in phase space. 
We set $L_z=1$ for all dimensions.
Furthermore, we set $\|\bB_{\text{ext}}\|=1$ and $\varepsilon = 1$. In this periodic case, where we do not use closures from the SBP framework, the discrete $l_2$-error for the discrete solution vector $\Theta$ is calculated as
\begin{equation}\label{eq:l2_error}
  \norm{\Theta}_{l_2} = \sqrt{ \frac{1}{N_{\text{dof}}} \sum_{i=0}^{N_{\text{dof}}-1}(\Theta_i - \bar\Theta_i)^2},
\end{equation}
where $N_{\text{dof}}$ is the number of degrees of freedom and $\bar \Theta_i$ denotes the exact solution at coordinate $i$. CFL is set to 0.2
and double precision is used on the GPU.
\begin{figure}[h] %
  \centering
  \begin{subfigure}[b]{0.45\linewidth}
    \centering
    \includegraphics[width=\linewidth]{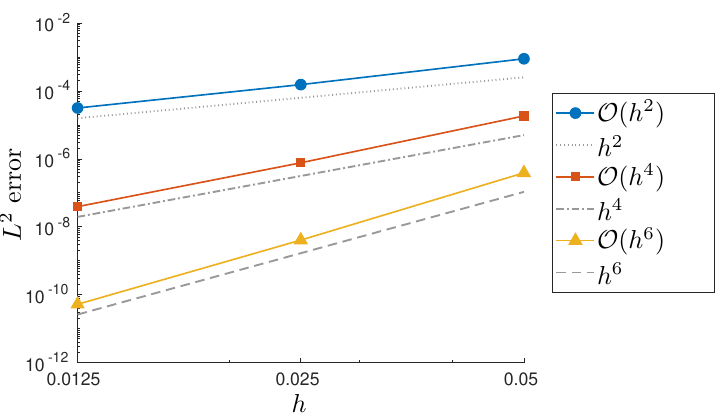}
    \caption{$f$}
  \end{subfigure}
  \hfill
  \begin{subfigure}[b]{0.45\linewidth}
    \centering
    \includegraphics[width=\linewidth]{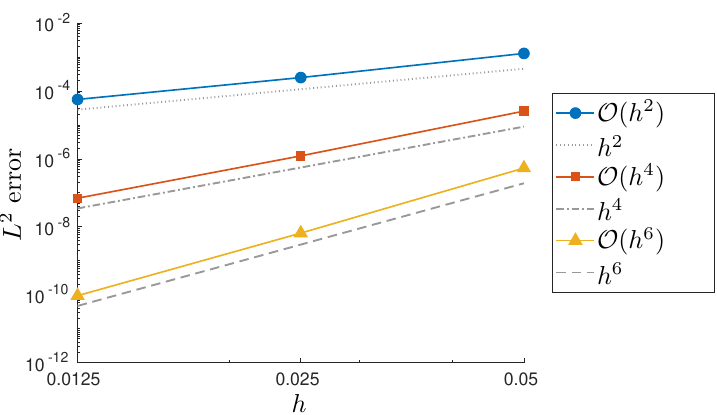}
    \caption{$E_x$}
  \end{subfigure}
  \\
  \vspace{1em}
  \begin{subfigure}[b]{0.45\linewidth}
    \centering
    \includegraphics[width=\linewidth]{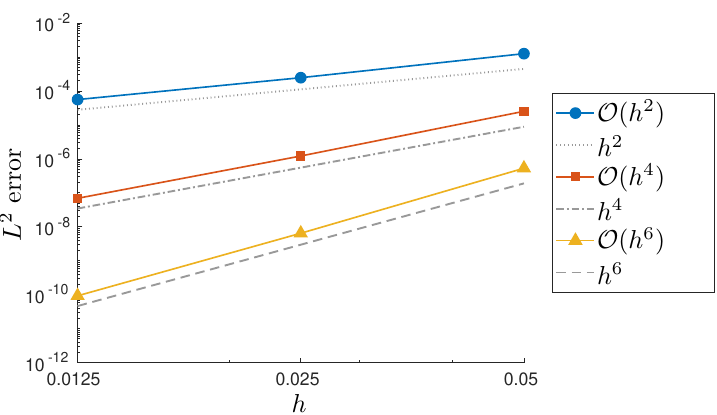}
    \caption{$E_y$}
  \end{subfigure}
  \hfill
  \begin{subfigure}[b]{0.45\linewidth}
    \centering
    \includegraphics[width=\linewidth]{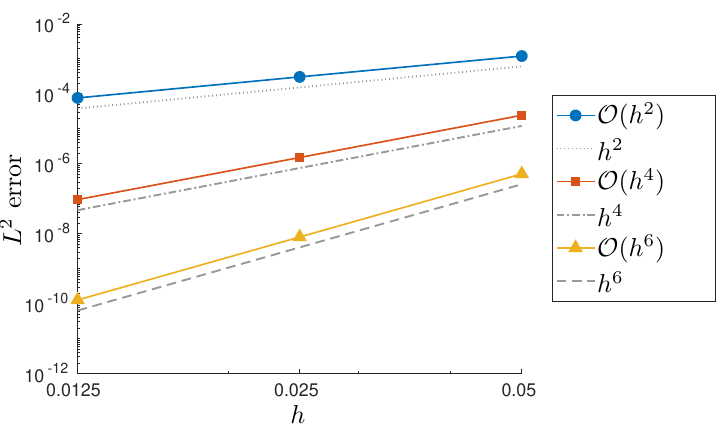}
    \caption{$B_z$}
  \end{subfigure}
  
  \caption{MMS convergence test: We run the simulation to $T=1$ for $m=20$, $m=40$ and $m=80$, where $m_x=m_y=m_v=m_w=m$.}
  \label{fig:2D2V_MMS_convergence}
\end{figure}

\begin{table}[h]
  \centering
  \small
  \begin{tabular}{c c cc cc cc cc}
    \toprule
    & & \multicolumn{2}{c}{$f$}
      & \multicolumn{2}{c}{$E_x$}
      & \multicolumn{2}{c}{$E_y$}
      & \multicolumn{2}{c}{$B_z$} \\
    \cmidrule(lr){3-4} \cmidrule(lr){5-6} \cmidrule(lr){7-8} \cmidrule(lr){9-10}
    Order & $m$ & $L^2$ & $p$ & $L^2$ & $p$ & $L^2$ & $p$ & $L^2$ & $p$ \\
    \midrule
    \multirow{3}{*}{2nd}
      & $20$ & 8.84E-04 & --   & 1.27E-03 & --   & 1.25E-03 & --   & 1.20E-03 & --   \\
      & $40$ & 1.53E-04 & 2.53 & 2.47E-04 & 2.36 & 2.45E-04 & 2.35 & 3.04E-04 & 1.98 \\
      & $80$ & 3.14E-05 & 2.28 & 5.61E-05 & 2.14 & 5.56E-05 & 2.14 & 7.60E-05 & 2.00 \\
    \addlinespace
    \multirow{3}{*}{4th}
      & $20$ & 1.83E-05 & --   & 2.54E-05 & --   & 2.51E-05 & --   & 2.43E-05 & --   \\
      & $40$ & 7.63E-07 & 4.58 & 1.22E-06 & 4.38 & 1.21E-06 & 4.38 & 1.50E-06 & 4.01 \\
      & $80$ & 3.90E-08 & 4.29 & 6.92E-08 & 4.15 & 6.87E-08 & 4.14 & 9.38E-08 & 4.00 \\
    \addlinespace
    \multirow{3}{*}{6th}
      & $20$ & 3.86E-07 & --   & 5.34E-07 & --   & 5.28E-07 & --   & 5.10E-07 & --   \\
      & $40$ & 4.04E-09 & 6.58 & 6.48E-09 & 6.36 & 6.42E-09 & 6.36 & 7.98E-09 & 6.00 \\
      & $80$ & 5.19E-11 & 6.28 & 9.30E-11 & 6.12 & 9.23E-11 & 6.12 & 1.26E-10 & 5.98 \\
    \bottomrule
  \end{tabular}
  \caption{Convergence under grid refinement. $L^2$ errors and observed
    convergence rates $p$ for the distribution function $f$ and the field
    components $E_x$, $E_y$ and $B_z$, using 2nd-, 4th- and 6th-order SBP
    operators.}
  \label{tab:mms_convergence}
\end{table}

Figure \ref{fig:2D2V_MMS_convergence} and Table \ref{tab:mms_convergence} show the expected convergence rates
for all variables.

\subsection{Diocotron instability}
We now consider a test problem for the 2D2V Vlasov--Maxwell system,
namely the diocotron instability as presented in \cite{Gu_2022}.
The instability manifests as the formation of vortices over time and occurs when two sheets of charges slip past each other.
We will include an external magnetic field in our model, which gives
us the following version of Vlasov's equation:
\begin{equation}
  \varepsilon f_t + \bv  \cdot \GRAD_x f + \frac{q}{m}\left( \bE  + \bv  \times \left( \bB  + \frac{1}{\varepsilon}\bB_{\text{ext}}\right) \right) \cdot \GRAD_v f = 0,
  \label{eq:vlasov_Bext}
\end{equation}
where $\bB $ is the self-consistent magnetic field and $\bB_{\text{ext}}$ is an external magnetic field. $1/\varepsilon$ determines
the strength of the external magnetic field. The factor $\varepsilon$
in front of $f_t$ is to speed up the evolution of the density function.
For consistency, the Maxwell equations are scaled accordingly:
\begin{equation}
  \begin{aligned}
    \GRAD_{\bx } \times \bE  &= -\varepsilon\, \partial_t \bB , \\
    \GRAD_{\bx } \times \bB  &= \ \varepsilon\, \partial_t \bE  + \bJ , \\
    \GRAD_{\bx } \cdot \bE  &= \rho - \rho_0, \\
    \GRAD_{\bx } \cdot \bB  &= 0,
                              \label{eq:maxwell_Bext}
  \end{aligned}
\end{equation}

where $\rho_0$ denotes the uniform background charge density.
We set the initial distribution as
\begin{equation}
  f(\bx , \bv ) = \frac{d(\bx )}{2\pi} \exp\left(-\frac{\|\bv \|^2}{2}\right), \quad \bx  = (x,y) \in \mathbb{R}^2,
  \label{eq:diocotron_ic}
\end{equation}
where the initial density is
\begin{equation}\label{eq:diocotron_density}
  d(\bx ) = 
  \begin{cases} 
    (1 + \beta \cos(\ell\theta)) \exp(-4(\|\bx \| - 6.5)^2) & \text{if } r^- \leq \|\bx \| \leq r^+, \\ 
    0 & \text{otherwise},
  \end{cases}
\end{equation}
with $\theta = \arctan(y/x)$ and $\ell$ the number of vortices.
In the proceeding simulations, we use $\beta = 0.2$, $r^-=5$ and
$r^+=8$. We set the external magnetic field to be constant in the $z$-axis
and zero elsewhere, i.e.\ $\bB_{\text{ext}} = (0,0,1)$. We test
with $\varepsilon=1$ and $\varepsilon=0.1$. The solutions in Figure \ref{fig:diocotron_eps1} and \ref{fig:diocotron_eps10} are visualized by integrating over the velocity dimensions.

\begin{figure}[h] %
  \centering
  \begin{subfigure}[b]{0.4\linewidth}
    \centering
    \includegraphics[width=\linewidth]{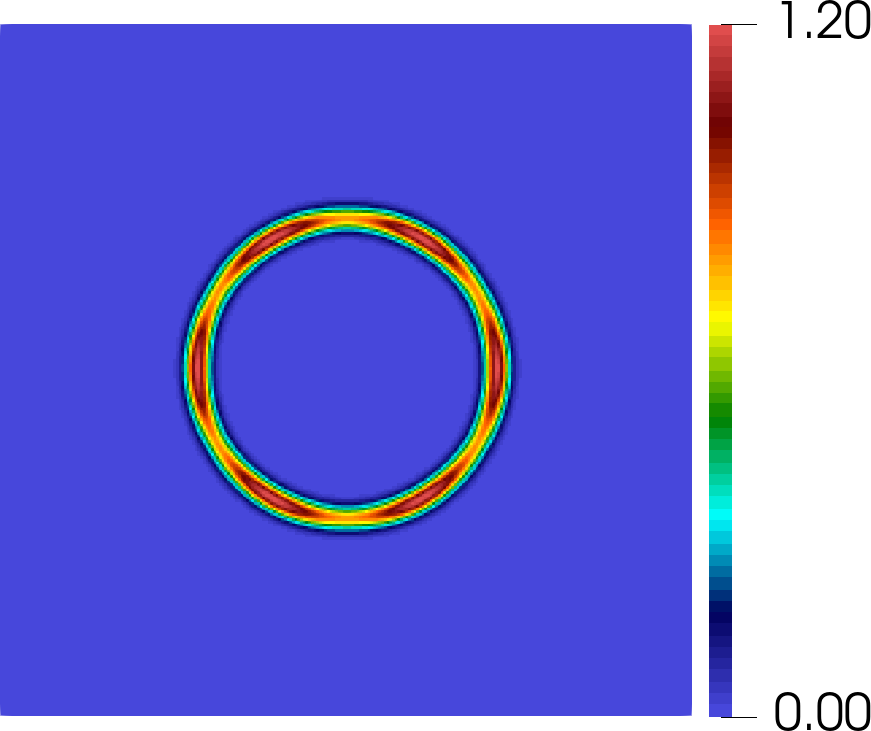}
    \caption{$t=0$}
  \end{subfigure}
  \hspace{0.2in}
  \begin{subfigure}[b]{0.4\linewidth}
    \centering
    \includegraphics[width=\linewidth]{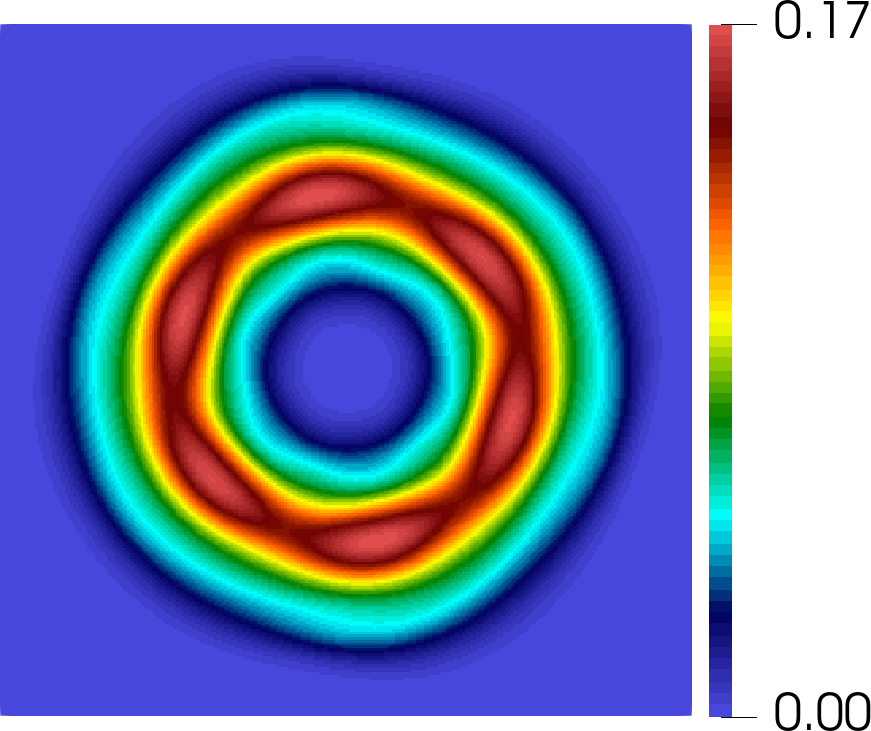}
    \caption{$t=10$}
  \end{subfigure}
  \\
  \begin{subfigure}[b]{0.4\linewidth}
    \centering
    \includegraphics[width=\linewidth]{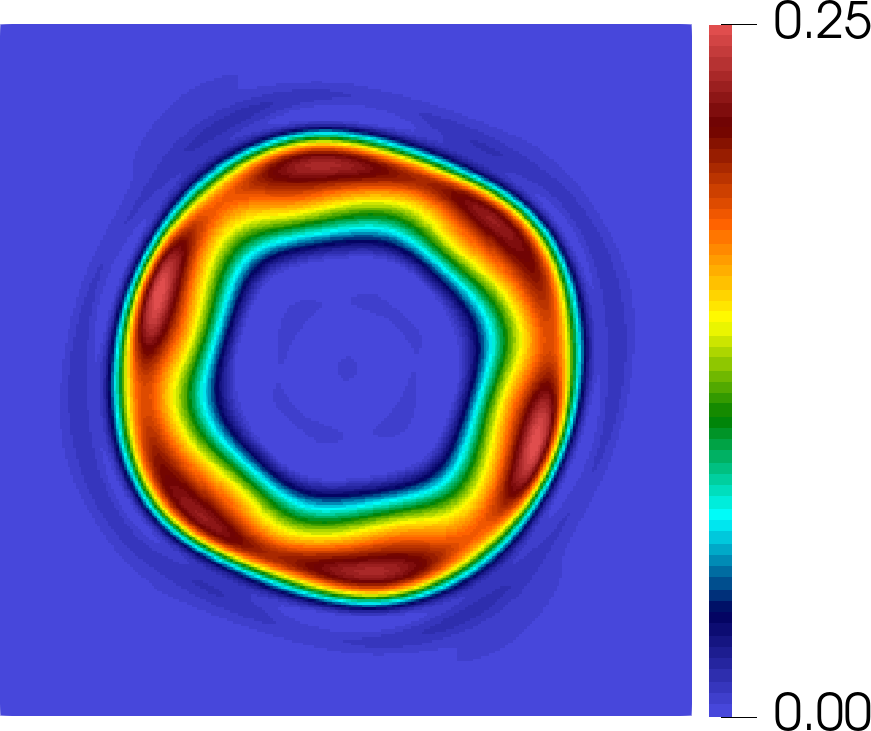}
    \caption{$t=15$}
  \end{subfigure}
  \hspace{0.2in}
  \begin{subfigure}[b]{0.4\linewidth}
    \centering
    \includegraphics[width=\linewidth]{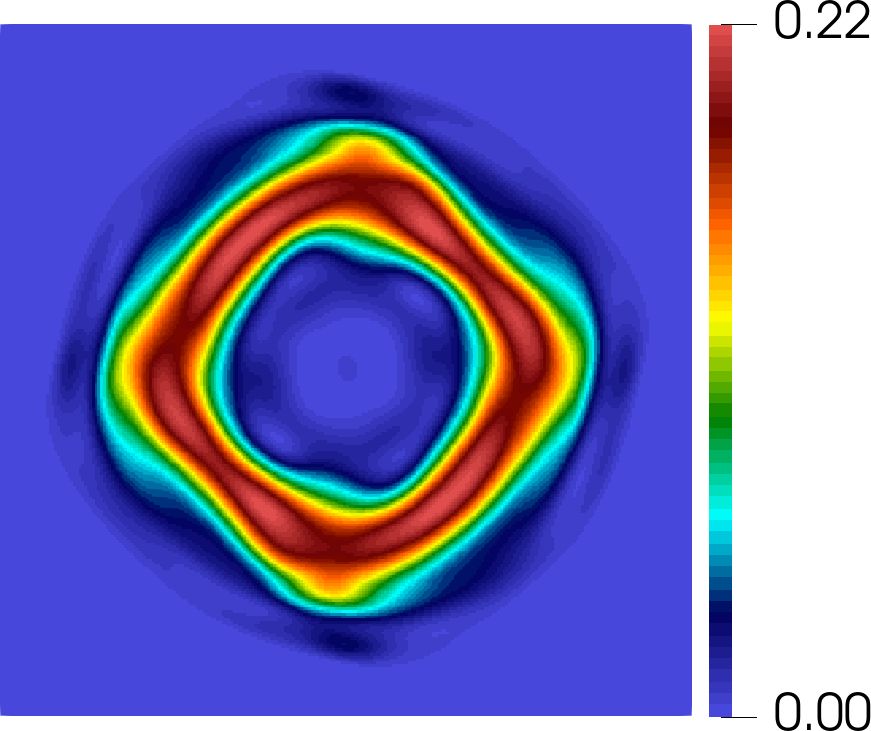}
    \caption{$t=30$}
  \end{subfigure}
  
  \caption{Diocotron instability: $\varepsilon=1$, $m_x=m_y=256$ and $m_v=m_w=80$ solution of $f(x, y, t)$. The horizontal and vertical axes represent position $x$ and $y$, respectively.}
  \label{fig:diocotron_eps1}
\end{figure}

In Figure \ref{fig:diocotron_eps1}, we see that the plasma is not
well confined and no diocotron instabilities are present, which is expected with a weak external magnetic field. Therefore, no vortices are seen,
but for $\ell=6$ there are six clear clusters that move in time.
CFL is set to 0.5, which yields the time-step size $\Delta t = 0.0073$, where each time-step takes $0.25$ seconds on the GPU.
Therefore, running the simulation with time $T=30$ takes about 17 minutes.

\begin{figure}[h] %
  \centering
  \begin{subfigure}[b]{0.4\linewidth}
    \centering
    \includegraphics[width=\linewidth]{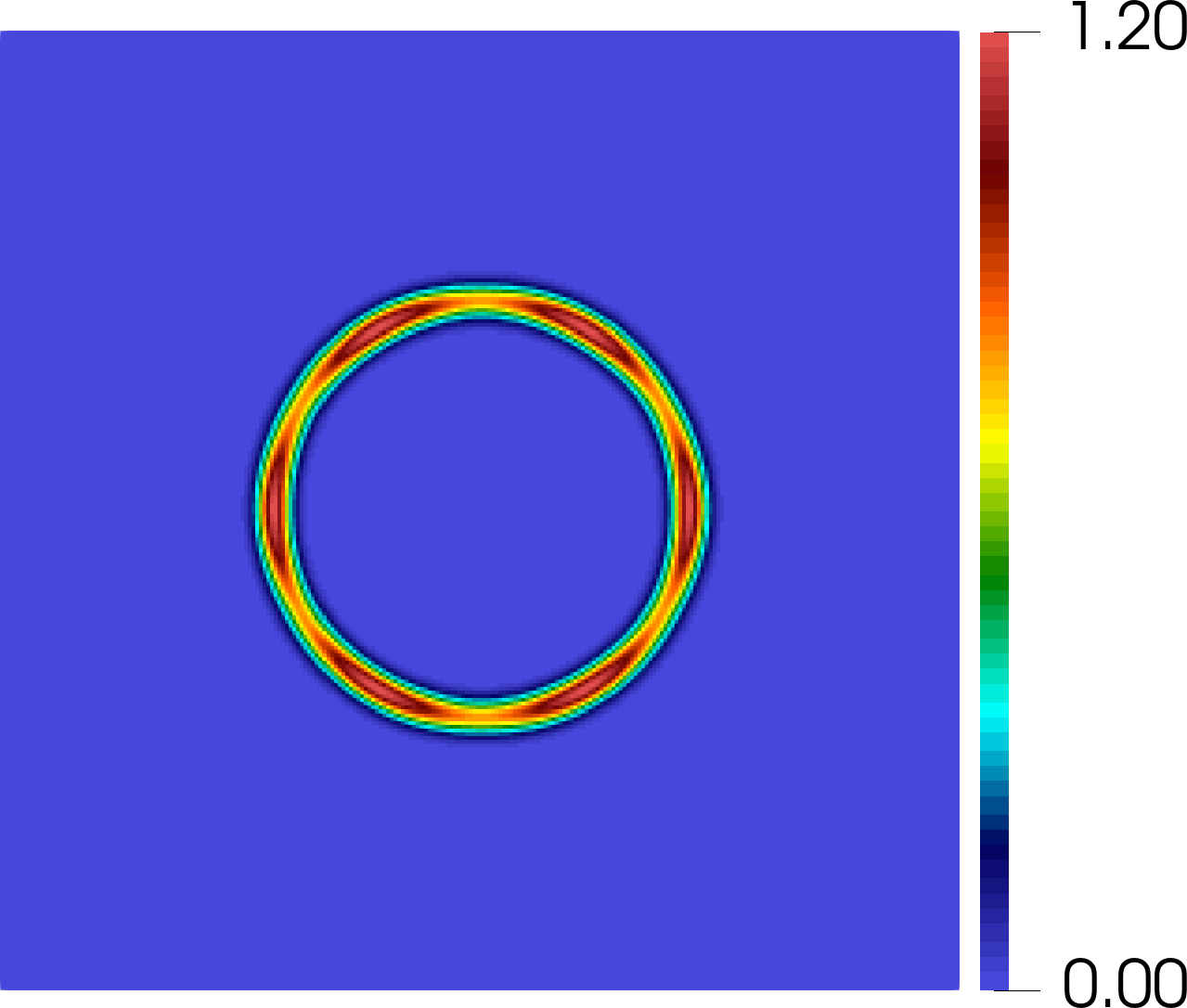}
    \caption{$t=0$}
  \end{subfigure}
  \hspace{0.2in}
  \begin{subfigure}[b]{0.4\linewidth}
    \centering
    \includegraphics[width=\linewidth]{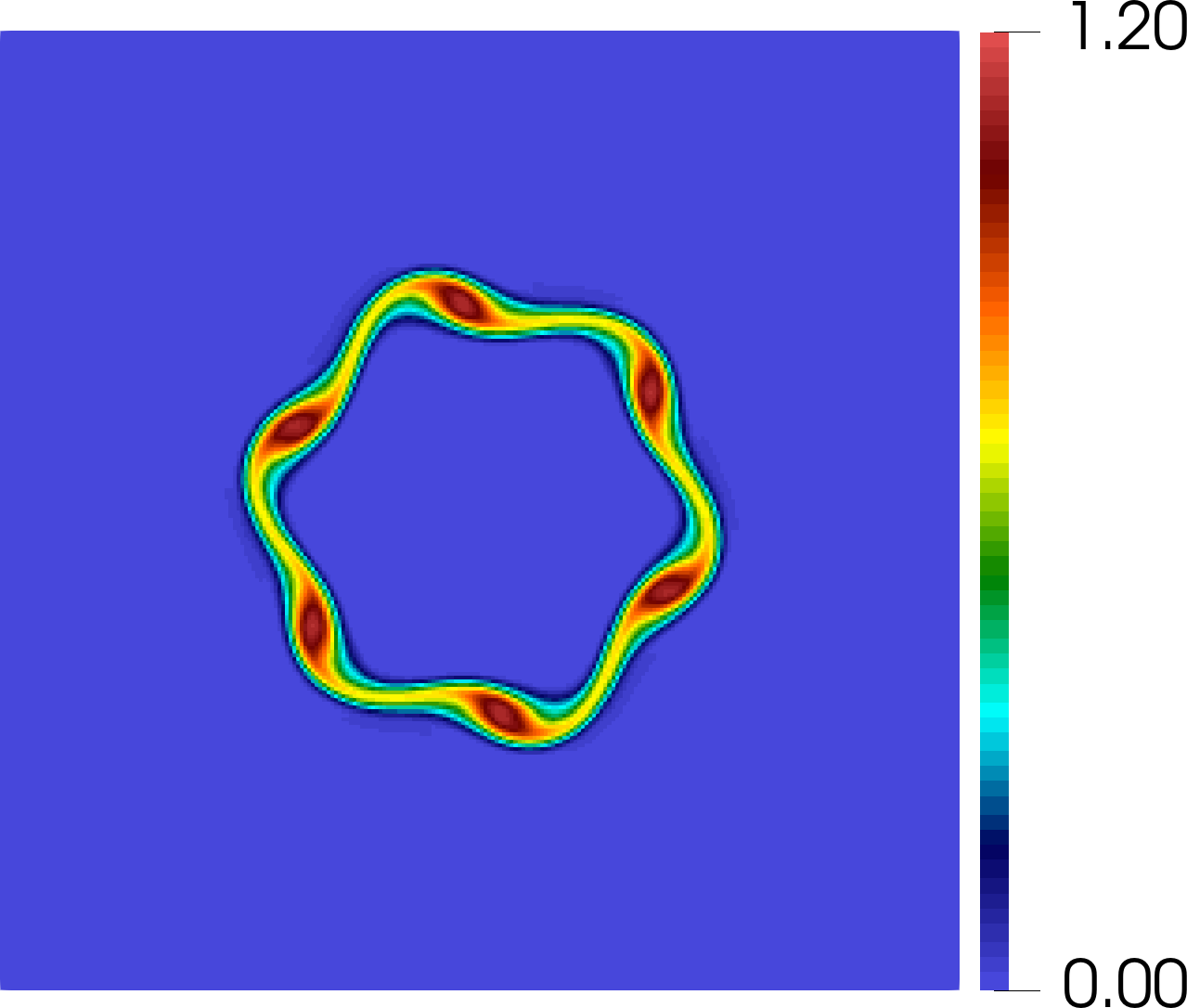}
    \caption{$t=10$}
  \end{subfigure}
  \\
  \begin{subfigure}[b]{0.4\linewidth}
    \centering
    \includegraphics[width=\linewidth]{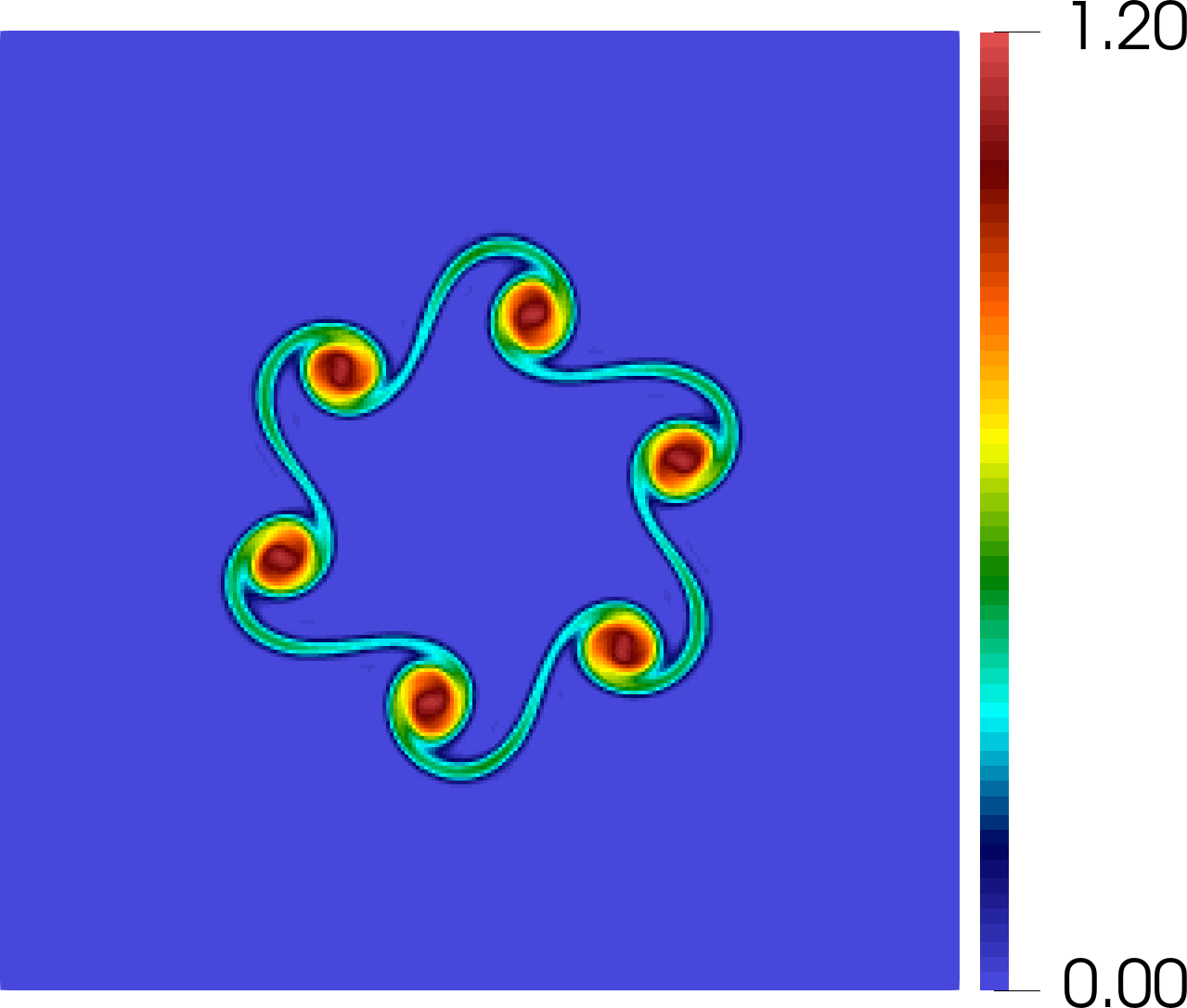}
    \caption{$t=20$}
  \end{subfigure}
  \hspace{0.2in}
  \begin{subfigure}[b]{0.4\linewidth}
    \centering
    \includegraphics[width=\linewidth]{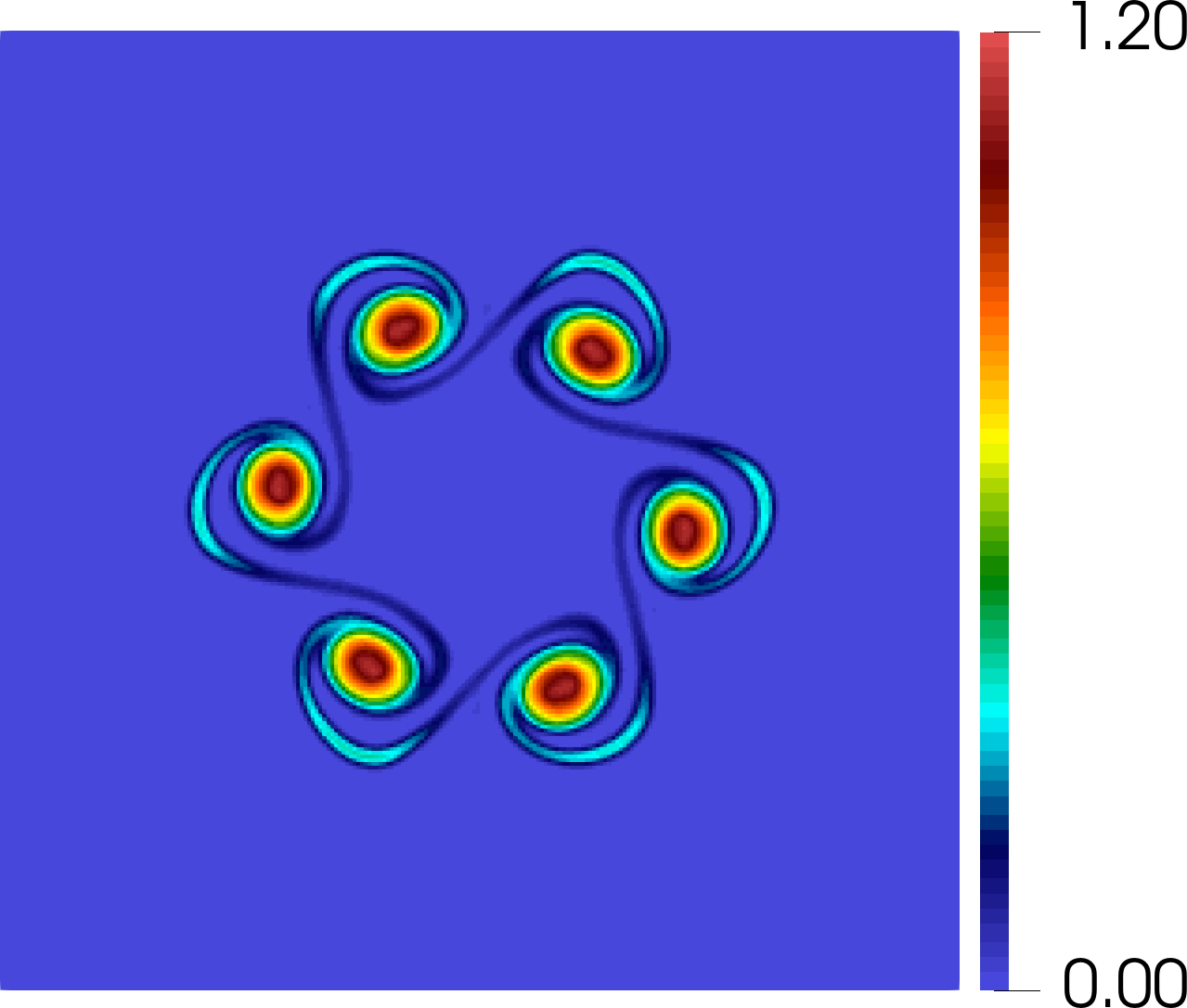}
    \caption{$t=30$}
  \end{subfigure}
  
  \caption{Diocotron instability: $\varepsilon=0.1$, $m_x=m_y=256$ and $m_v=m_w=80$ solution of $f(x, y, t)$. The horizontal and vertical axes represent position $x$ and $y$, respectively.}
  \label{fig:diocotron_eps10}
\end{figure}
With a stronger magnetic field,
the plasma is now well confined and $\ell=6$ vortices form from the
diocotron instability, seen in Figure \ref{fig:diocotron_eps10}.
For the $\varepsilon=0.1$ case, the simulation takes much longer to run compared to the $\varepsilon=1$ case, because the system becomes
stiffer with increased strength of the magnetic field. %
Using $\text{CFL}=0.5$, the time-step becomes $\Delta t = 0.000123633$, $0.25$ seconds per time-step and about 17 hours for the whole simulation.

To conclude this benchmark, we verify Gauss' law by numerically computing and plotting the divergence residual,
\begin{equation}
  R(\bx, t) = \nabla \cdot \bE - (\rho - \rho_0).
  \label{eq:guass_law_residual}
\end{equation}
\begin{figure}[h]
  \centering
  \includegraphics[width=0.5\linewidth]{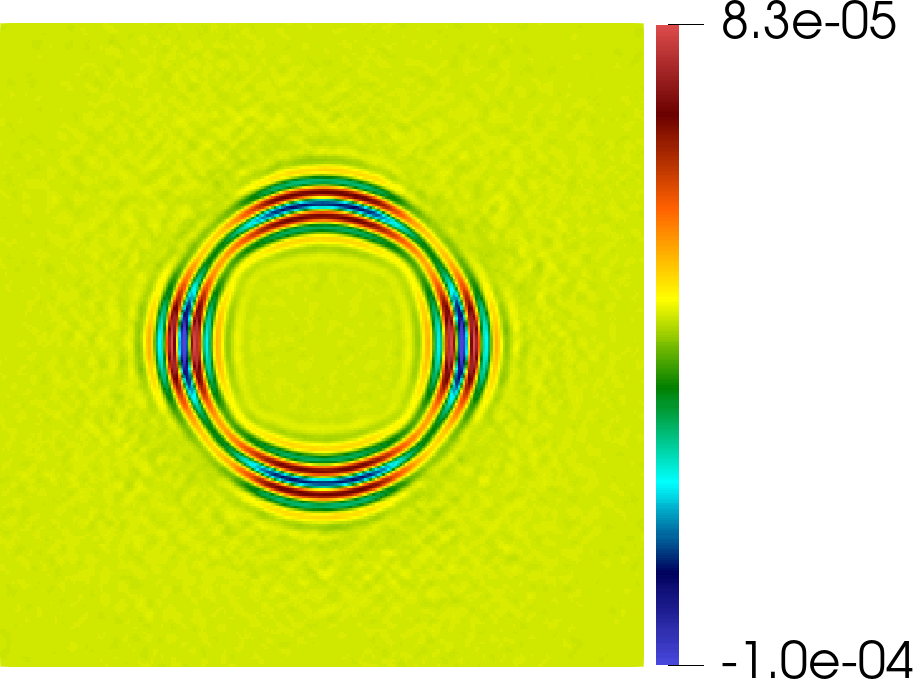}
  \caption{Gauss' law verification: $R(\bx, t)$ at $t=30$. The small residual seen in the figure is 
  SBP FD discretization error that stems from the solution to Poisson's equation. This error decays slightly in time
  due to numerical dissipation.}
  \label{fig:gauss_law_verification}
\end{figure}
We see in Figure \ref{fig:gauss_law_verification} that the residual stays small in time. It is largest
where the plasma distribution function quickly varies, which is expected, since this is where the numerical
differentiation from computing (\ref{eq:guass_law_residual}) has the largest truncation error.
The error stems from solving Poisson's equation with SBP FD using the initial condition in (\ref{eq:diocotron_ic}).
Poisson's equation is solved so that the initial electromagnetic fields are consistent with the initial distribution of plasma.
Due to the stabilization, the initial residual decays slightly in time.
\subsection{Weibel-type instability}
We now simulate a 2D2V Weibel-type instability \cite{Liu_et_al_2025}, to further verify
the solver and its efficiency. In contrast to the diocotron instability, we do not add
an external magnetic field. We therefore solve (\ref{eq:vlasov_Bext}) and (\ref{eq:maxwell_Bext})
with $\bB_{\text{ext}}=0$ and $\varepsilon=1$.
The initial ion distribution is set as
\begin{equation}\label{eq:weibel_ic}
  f(x, y, v, w) = \frac{1}{4\pi v_t^2} \exp \left( -\frac{v^2}{2v_t^2} \right) \left(\exp \left( -\frac{(w - u_d)^2}{2v_t^2} \right) + \exp \left( -\frac{(w + u_d)^2}{2v_t^2} \right) \right).
\end{equation}Instead of solving Poisson's equation, the electromagnetic fields are initialized as a bath of fluctuations,
\begin{equation}\label{eq:weibel_fields}
  \begin{aligned}
    E_x(x, y) &= \sum_{i=-N_m}^{N_m} \sum_{j=-N_m}^{N_m} \tilde{E}_{x,i,j} \sin \left( i k_x x + j k_y y + \tilde{\phi}^{E_x}_{i,j} \right),\\
    B_z(x, y) &= \sum_{i=-N_m}^{N_m} \sum_{j=-N_m}^{N_m} \tilde{B}_{z,i,j} \sin \left( i k_x x + j k_y y + \tilde{\phi}^{B_z}_{i,j} \right),\\
    E_y(x, y) &= \sum_{i=-N_m}^{N_m} \sum_{j=-N_m}^{N_m} \tilde{E}_{y,i,j} \sin \left( i k_x x + j k_y y + \tilde{\phi}^{E_y}_{i,j} \right),
  \end{aligned}
\end{equation}where $\tilde{E}_{x,i,j}$, $\tilde{B}_{z,i,j}$ and $\tilde{E}_{y,i,j}$ are random amplitudes
with average amplitudes around $10^{-8}$, $\tilde{\phi}_{i,j}$ are random phases and $N_m=8$.
As done in \cite{Liu_et_al_2025}, we set the wave numbers to 
$k_x = k_{\max}^{\text{FI}}$ and $k_y = k_{\max}^{\text{TS}}/3$, where $k_{\max}^{\text{FI}} = 2.31$ 
corresponds to the fastest-growing filamentation mode and $k_{\max}^{\text{TS}} = 6.14$ corresponds 
to the fastest-growing two-stream mode.
Drift and thermal velocity are set as $u_d = 0.1$ and $v_t = 0.1 u_d$, respectively.
We solve in phase space $[0, L_x] \times [0, L_y] \times [-v_m, v_m] \times [-w_m, w_m]$
$= [0, 2\pi/k_x] \times [0, 2\pi/k_y] \times [-0.3, 0.3] \times [-0.4, 0.4]$.
Phase space is discretized with $m_x m_y m_v m_w = 128^4$ points.
We solve using $\text{CFL}=0.8$ and
simulation time $T=50$. The time-step becomes $\Delta t=0.011$, $0.17$ seconds per time-step and the simulation takes about 12 minutes to run in total.
The resulting distribution function at $t=50$ is shown in Figure \ref{fig:wiebel_instability_2D},
projected onto the $(x,v)$ and $(y,w)$ planes, and in Figure \ref{fig:wiebel_instability_3D} as a
three-dimensional view of $f(x,y,w)$.

\begin{figure}[h] %
  \centering
  \begin{subfigure}[b]{0.4\linewidth}
    \centering
    \includegraphics[width=\linewidth]{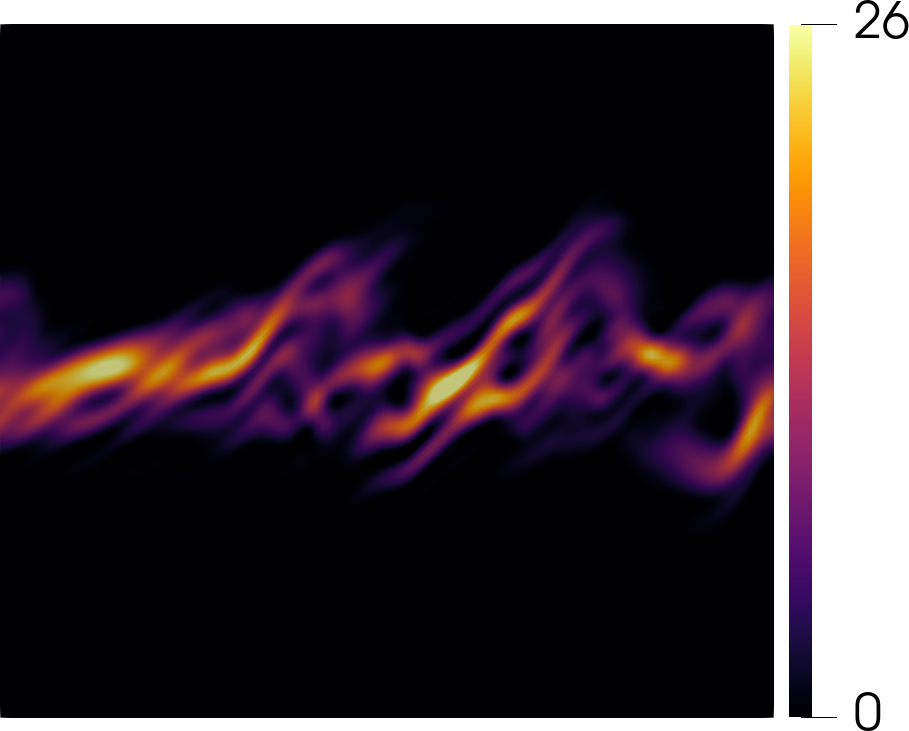}
    \caption{$f(x,v)$ at $t=50$}
  \end{subfigure}
  \hspace{0.2in}
  \begin{subfigure}[b]{0.4\linewidth}
    \centering
    \includegraphics[width=\linewidth]{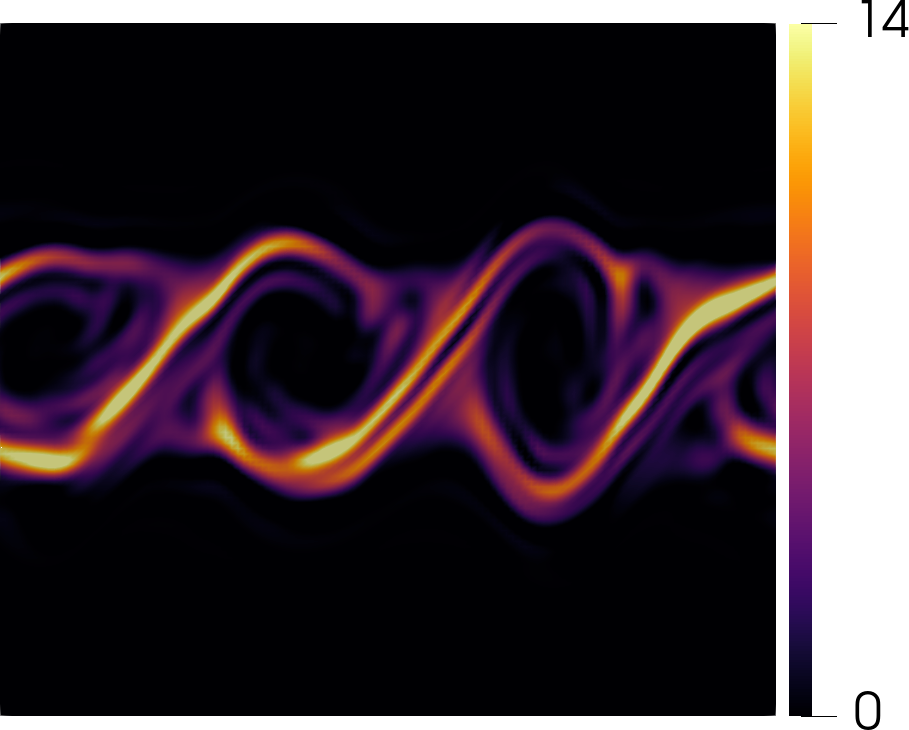}
    \caption{$f(y,w)$ at $t=50$}
  \end{subfigure}
  
  \caption{Weibel instability: Plotted in two dimensions. The horizontal and vertical axes represent position and velocity, respectively.}
  \label{fig:wiebel_instability_2D}
\end{figure}

\begin{figure}[h]
  \centering
  \includegraphics[width=0.5\linewidth]{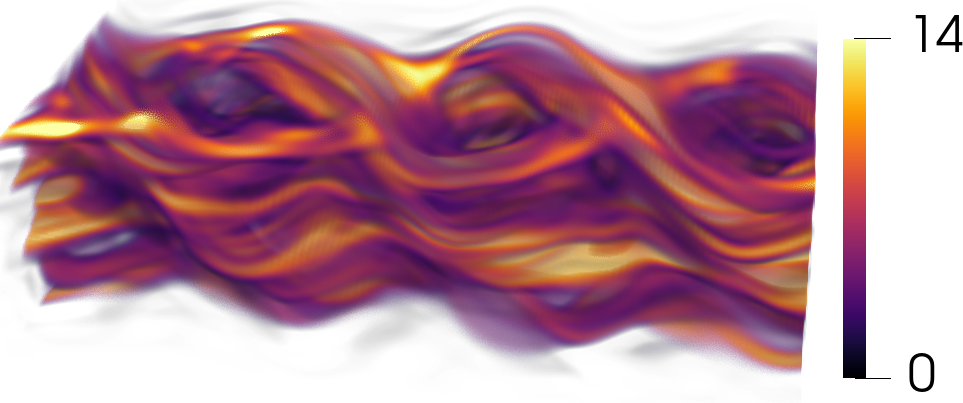}
  \caption{Weibel instability: $f(x, y, w)$ at $t=50$. Plotted in three dimensions.}
  \label{fig:wiebel_instability_3D}
\end{figure}
Slight differences in the solutions presented in \cite{Liu_et_al_2025} are due to ions
being simulated instead of electrons, not disclosing the specific value of $N_m$ used and
the stochastic nature of the electromagnetic initialization.

\subsection{Kelvin--Helmholtz instability}
\label{subsec:kh}

Lastly, we present results for a simulation of the Kelvin--Helmholtz instability, by mimicking the
setup in \cite{Umeda_et_al_2010}. This simulation setup allows us to evaluate the solver's ability to 
accurately capture the $\bE \times \bB$ drift and cross-scale kinetic coupling.
Two particle species are considered, namely ions and electrons. We will use $s$ to denote
the species type, where we denote $s=e$ for electrons and $s=i$ for ions. These have
charge $q_s$ and mass $m_s$. We naturally have that $q_i=1$ and $q_e=-1$, and for
computational efficiency, we let $m_i / m_e = 16$ (when in reality, this ratio is much bigger).
We let $m_i=1$, and set the speed of light in Maxwell's equations to $c=80$, which stems from
$c/v_{ti}=80$, where we set the ion thermal velocity $v_{ti}$ to unity.
This gives us the following systems of equations to be solved:
\begin{equation}\label{eq:kh_system}
  \begin{aligned}
    &\partial_t f_s + \bv  \cdot \GRAD_x f_s + \frac{q_s}{m_s}\left( \bE  + \bv  \times \bB  \right) \cdot \GRAD_v f_s = 0,\quad s=i,e,\\
    &\GRAD_{\bx } \times \bE  = -\partial_t \bB , \\
    &\GRAD_{\bx } \times \bB  = \frac{1}{c^2} \left( \partial_t \bE  + \bJ \right) .
  \end{aligned}
\end{equation}
The initial distribution is an MHD equilibrium characterized by a velocity shear layer. The ion density profile
transitions between a high-density cold region and a low-density hot region,
\begin{equation}\label{eq:kh_ni}
  n_i(y) = \frac{n_0}{2} \left[ (1+\gamma)+(1-\gamma)\tanh{(y/L_s)} \right],
\end{equation}
where $n_0$ is the reference density at $y=-\infty$ and $\gamma=0.1$ is the asymptotic density ratio.
The electron number density is slightly modified to satisfy Gauss' law,
\begin{equation}\label{eq:kh_ne}
  n_e(y) = n_i(y) + \frac{\epsilon_0 B_0 u_0}{2L_sq_i} \frac{1}{\cosh^2{(y/L_s)}}.
\end{equation}
Hence, the system is not charge neutral.
The macroscopic drift velocity in the x-direction is given by
\begin{equation}\label{eq:kh_drift}
  u_{x,s}(y) = -\frac{u_0}{2} \tanh{(y/L_s)}.
\end{equation}

To maintain a uniform plasma thermal pressure across the simulation domain, the local thermal velocity must scale with the density profile. 
Thus, we define $v_{ts}(y) = v_{ts,0} \sqrt{n_0 / n_s(y)}$, where $v_{ts,0}$ is the base thermal velocity at $y=-\infty$.
The particles are loaded by the shifted Maxwell distribution with drift velocity $u_0(y)$ and thermal velocity $v_{ts}(y)$,
\begin{equation}\label{eq:kh_ic}
  f_s(x, y, v, w) = \frac{n_s(y)}{2\pi v_{ts}^2(y)} \exp \left( - \frac{(v - u_{x,s}(y))^2 + (w - \delta w(x))^2}{2 v_{ts}^2(y)} \right).
\end{equation}
We solve in phase space $(x, y) \in [0, 11.2L_s] \times [-7L_s, 7L_s]$.
Parameters are chosen such that the most unstable Kelvin--Helmholtz vortex is located at
the maximum wave number $k_{\text{max}}=2\pi / L_x \approx 0.56$. Note that we include a perturbation
$\delta w(x)=\delta u_0 \sin(k_x x)$, where $\delta u_0=0.01u_0$, to jumpstart the instability.
Periodic BCs are imposed for the $x$-direction, while open BCs are used for the $y$-direction.
This is done by clamping the indices in the $y$-direction to the endpoints, 
effectively applying a homogeneous Neumann condition at the edges.
Lastly, electromagnetic fields are initialized as
\begin{equation}\label{eq:kh_Ey}
  E_y(y) = -\frac{B_0 u_0}{2} \tanh{(y/L_s)}
\end{equation}
where $B_0 = \omega_{ci} m_i / q_i$.
The full set of parameters used in this setup is summarized in Table \ref{tab:kh_parameters}.
\begin{table}[h]
  \centering
  \small
  \begin{tabular}{l c c}
    \hline
    Parameter & Symbol & Value \\
    \hline
    Ion-to-electron mass ratio & $m_i/m_e$ & 16 \\
    Speed of light & $c/v_{ti}$ & 80.0 \\
    Half thickness of shear layer & $L_s$ & 8.0 $r_i$ \\
    Alfv\'{e}n velocity & $V_A$ & 7.0 $v_{ti}$ \\
    Ion sound velocity & $V_S$ & 2.0 $v_{ti}$ \\
    Velocity shear & $u_0$ & 7.28 $v_{ti}$ \\
    Ion cyclotron to plasma frequency ratio & $\omega_{ci}/\omega_{pi}$ & 0.0875 \\
    Ion-to-electron temperature ratio & $T_i/T_e$ & 1 \\
    Asymptotic number density ratio & $\gamma$ & 0.1 \\
    Configuration space grid cells & $m_x \times m_y$ & 128 $\times$ 160 \\
    \hline
  \end{tabular}
  \caption{Simulation quantities and parameters for the Kelvin--Helmholtz instability setup from \cite{Umeda_et_al_2010}.}
  \label{tab:kh_parameters}
\end{table}
We solve using $\text{CFL}=0.5$ and %
simulation time $T=120$. The time-step becomes $\Delta t=0.00057$, $0.18$ seconds per time-step and the simulation takes about 10.8 hours to run in total.

\begin{figure}[h]
  \centering
  \begin{subfigure}[b]{0.4\linewidth}
    \centering
    \includegraphics[width=\linewidth]{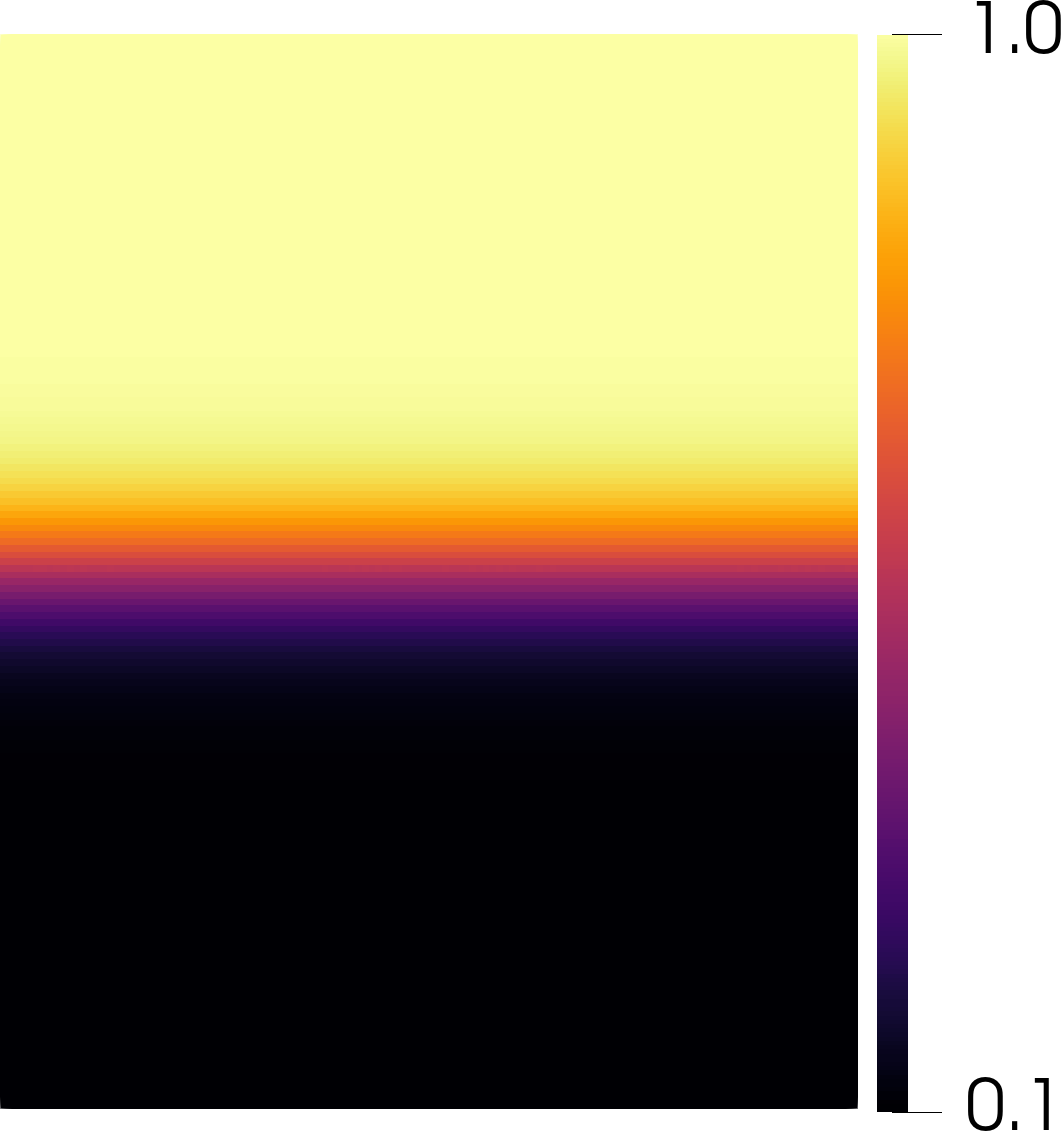}
    \caption{$t=0$}
  \end{subfigure}
    \hspace{0.2in}
    \begin{subfigure}[b]{0.4\linewidth}
        \centering
        \includegraphics[width=\linewidth]{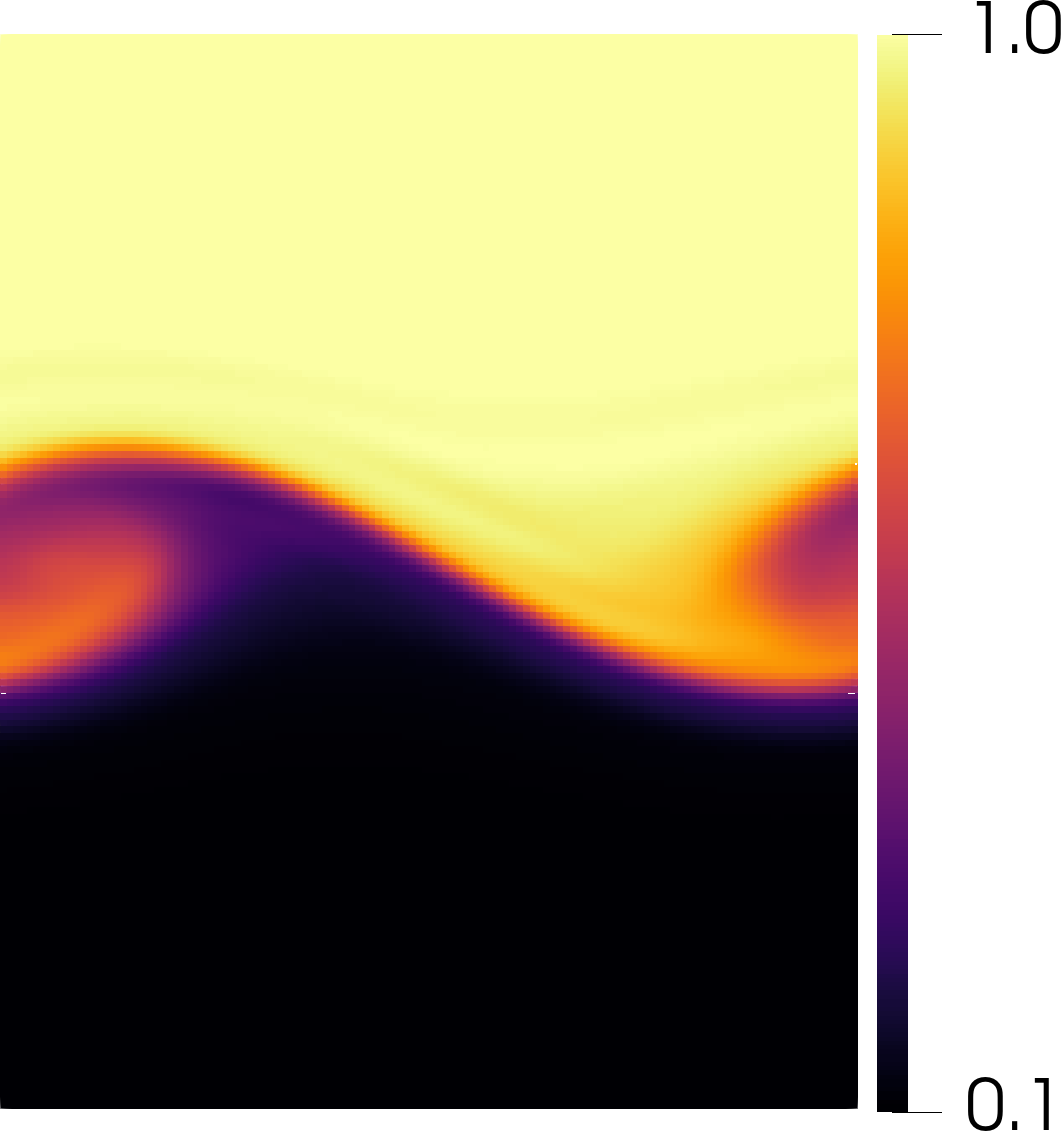}
        \caption{$t=60$}
    \end{subfigure}
    \\
    \begin{subfigure}[b]{0.4\linewidth}
        \centering
        \includegraphics[width=\linewidth]{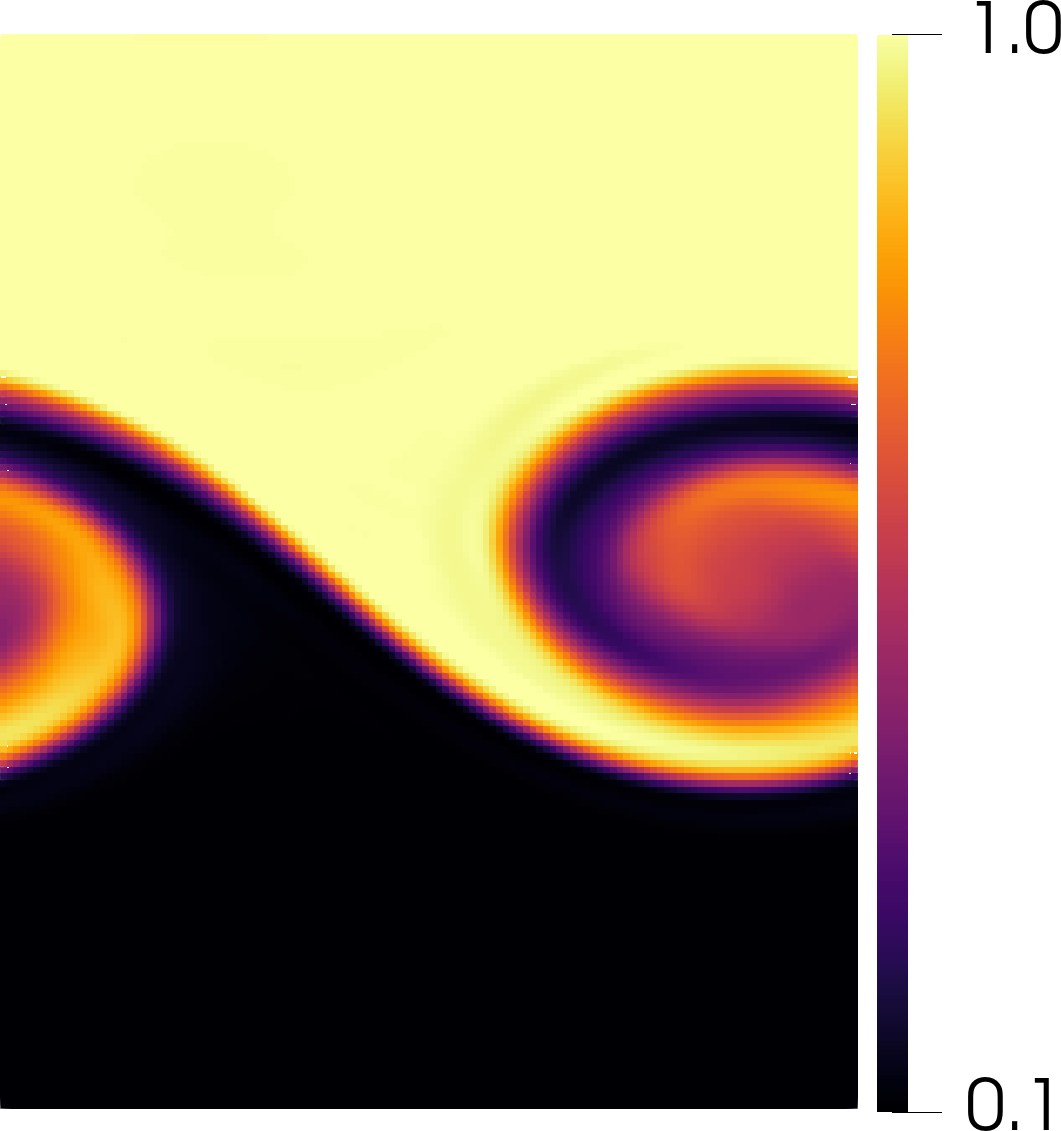}
        \caption{$t=80$}
    \end{subfigure}
    \hspace{0.2in}
    \begin{subfigure}[b]{0.4\linewidth}
        \centering
        \includegraphics[width=\linewidth]{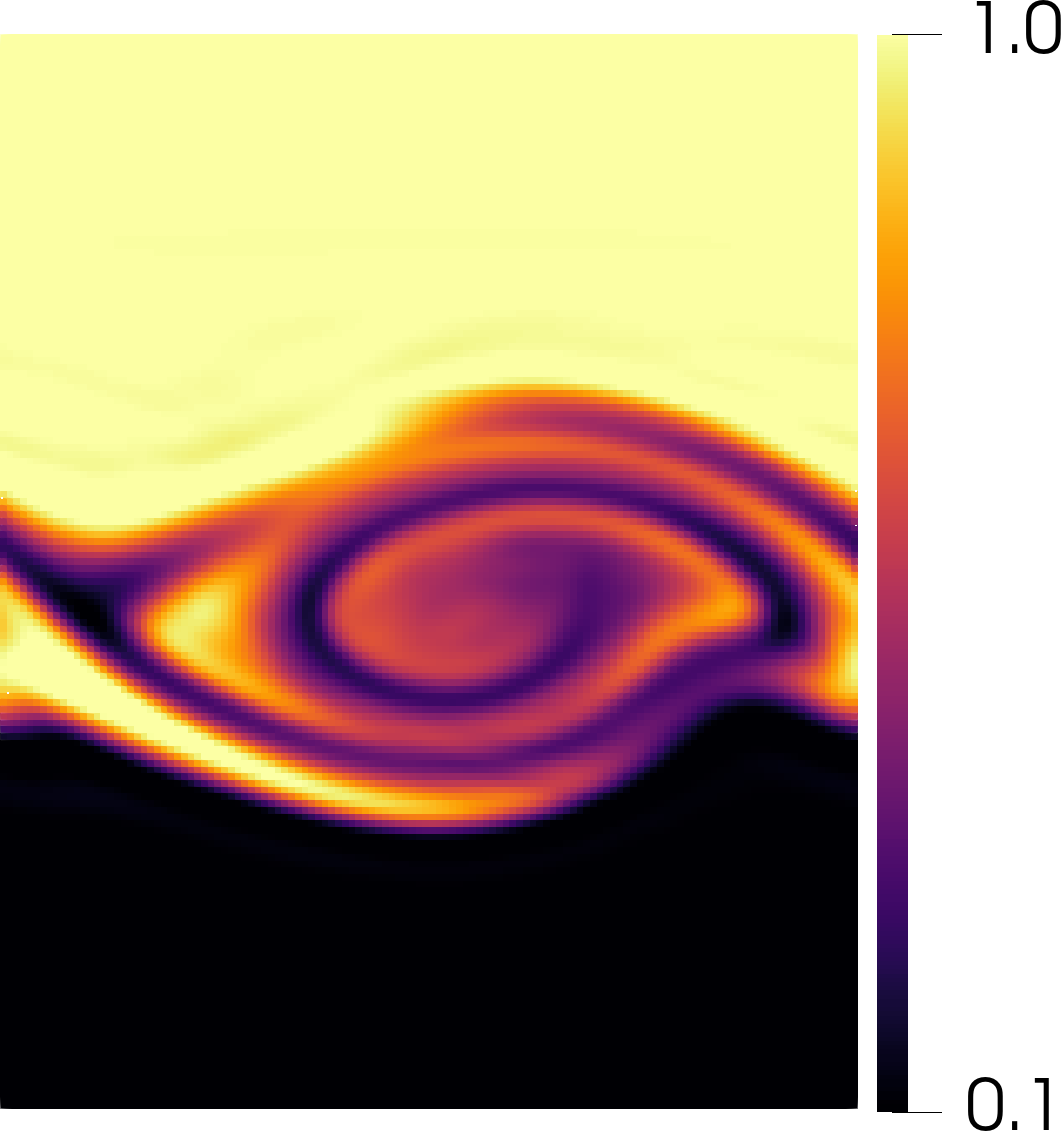}
        \caption{$t=120$}
    \end{subfigure}
    
    \caption{Kelvin--Helmholtz instability: $m_x=128$, $m_y=160$ and $m_v=m_w=80$ solution of $f(x, y, t)$. The horizontal and vertical axes represent position $x$ and $y$, respectively.}
    \label{fig:kelvin_helmholtz}
\end{figure}
As in \cite{Umeda_et_al_2010}, the instability grows from $k_{\text{max}}$. 
However, our setup jumpstarts the instability with an explicit perturbation at $k_x = k_{\text{max}}$. 
By contrast, the reference simulation is initiated with an unperturbed MHD equilibrium. 
Because this MHD equilibrium takes several ion cyclotron periods to approach a true Vlasov--Maxwell 
equilibrium, during which the spatial profile of the ion density is slightly modified, the reference 
instability grows at a slightly different rate than ours. Despite this difference, the structural 
development of the instability shown in Figure \ref{fig:kelvin_helmholtz} remains similar.

\section{Conclusions}
\label{Sec:conclusions}
In this paper, we have developed a high-order upwind SBP finite difference discretization of the 2D2V
Vlasov--Maxwell system, built from tensor products of one-dimensional operators.
The scheme is stabilized by Lax--Friedrichs flux splitting of the advection terms and integrated 
in time using the fourth-order, five-stage strong stability
preserving explicit Runge--Kutta method.
The implementation is matrix-free in CUDA, where no differentiation operator is ever assembled
and every operator application reduces to a local stencil evaluation, yielding low memory footprint.
The semi-discrete scheme conserves mass and momentum,
and satisfies $\p_t\|\vf\|_{\sfH_\Omega}^2 \le 0$.
Fully discrete, mass is conserved exactly and momentum up to truncation error. Total energy is not conserved
due to upwind stabilization that dissipates it.
Numerically we verify the high order of accuracy and reproduce the diocotron, Weibel
and Kelvin--Helmholtz instabilities, the largest run using 420 million degrees of freedom on a single NVIDIA L40.
Extending the framework to 3D3V is the natural next step, and will require multiple GPUs and likely an implicit
treatment of the stiff magnetized regime.

\section*{Funding}
This research is funded by Swedish Research Council (VR) under grant
number 2025-04764 and 2021-04620.

\section*{Acknowledgements}
The computations were enabled by resources in project UPPMAX 2026/1-45 provided
by Uppsala University at UPPMAX.

\newpage
\bibliographystyle{abbrvnat} 
\bibliography{ref}
\end{document}